\newcommand{\field}[1]{\mathbb{#1}}
\newcommand{\C}{\field{C}}
\newcommand{\R}{\field{R}}
\DeclareMathOperator{\rank}{rank}
\DeclareMathOperator{\Stab}{Stab}
\DeclareMathOperator{\im}{Im}
\DeclareMathOperator{\sgn}{sgn}
\newtheorem{defi}{Definition}[section]
\newtheorem{lem}[defi]{Lemma}
\newtheorem{theo}[defi]{Theorem}
\newtheorem{re}[defi]{Remark}
\subjclass{14 D 99, 14 R 99, 51 M 99}
\thanks{The  authors were partially supported by the Narodowe Centrum Nauki grant number 2015/17/B/ST1/02637.}
\title[On quadratic polynomial mappings of $f:\C^2\to\C^2$]{On quadratic polynomial mappings  $f:\C^2\to\C^2$} \makeatletter
\author{M. Farnik \& Z. Jelonek}
\address[M. Farnik]{Jagiellonian University\\
Faculty of Mathematics and Computer Science\\
{\L}ojasiewicza 6, 30-348 Krak\'ow, Poland}
\email{michal.farnik@gmail.com}
\address[Z. Jelonek]{Instytut Matematyczny\\
Polska Akademia Nauk\\
\'Sniadeckich 8, 00-656 Warszawa, Poland}
\email{najelone@cyf-kr.edu.pl}
\date{\today}
\begin{document}

\begin{abstract}{We show that up to linear equivalence, there is only finitely many  polynomial
quadratic mappings $f:\C^2\to\C^2$ and $f:\R^2\to\R^2$. We list all possibilities.}
\end{abstract}

\maketitle

\section{Introduction}
Let $\Omega(d_1,d_2)$ denote the space of polynomial mappings $f=(f_1,f_2):\C^2\to\C^2$, where deg $f_1\le d_1$ and
deg $f_2\le d_2$. Let $f,g\in \Omega(d_1,d_2)$. We say that $f$ is topologically (respectively linearly) equivalent to $g$, if there are homeomorphisms (respectively linear isomorphisms) $\Phi,\Psi:\C^2\to\C^2$, such that $f=\Psi\circ g\circ \Phi$. In the paper \cite{a-n} it was showed that there is only a finite number of different topological types of mappings in  $\Omega(d_1,d_2)$. Moreover, we know (see e.g. \cite{jel2}) that there is a Zariski open dense subset $U\subset \Omega(d_1,d_2)$ such that every mapping $f\in U$ has the same, generic, topological type. If a mapping $f$  has a generic topological type, then we say that $f$ is a generic mapping.  In practice it is difficult to describe the generic  topological type and other topological types  effectively.

Here we consider the case $d_1=d_2=2.$ We show that in this case the topological equivalence almost coincides with the linear equivalence. Moreover, we obtain a full classification of quadratic mappings of $\C^2,$ with respect to the linear (hence also topological) equivalence. In particular we find a model of a generic mapping of $\Omega(2,2).$

We explain our basic idea. It was proved in \cite{fjr} that a generic polynomial mapping from $\Omega(2,2)$ has a rational cuspidial curve of degree $4$ as a discriminant, moreover this curve is tangent in two smooth points to the line at infinity. On the other hand any two such rational cuspidial curves are projectively equivalent (see \cite{moe}), hence it is easy to deduce that   discriminants of generic mappings from $\Omega(2,2)$ are linearly equivalent. Using \cite{jel} we can deduce that any two generic mappings from $\Omega(2,2)$ are algebraically equivalent and consequently (since they have the same algebraic degree) they are linearly equivalent. Moreover,  there are only finitely many  possible discriminants of quadratic mappings, up to a linear equivalence (because in the non-generic case they have a non-trivial action of an infinite affine group). Hence we  can expect that there is only a finite number of orbits of the action of a linear group on $\Omega(2,2)$. We show that it is indeed the case. In fact, it can be done in a very elementary way, however it is suprising that this result was not discovered up till now.

Let $C(f), \Delta(f), \mu(f)$ denote : the set of critical points, the discriminant and topological degree (see Definition \ref{defi}). We have the following possibilities:

Generically-finite mappings:

(1) (the generic case) $f_1=(x^2+y,y^2+x)$, $C(f_1)=\{4xy-1=0\}$ is a hyperbola and $\Delta(f_1)=\{2^8x^2y^2-2^8x^3-2^8y^3+2^5\cdot 9xy-27=0\}$ is a reduced and irreducible curve with $3$ cusps at points $f_1(\frac{\varepsilon}{2},\frac{\varepsilon^2}{2})$, where $\varepsilon^3=1$, $\dim O(f_1)=12$,
$O(f_1)$ is an open and dense affine subvariety of $\Omega(2,2)$, moreover $\chi(O(f_1))=0$ and $\mu(f_1)=4$.

(2) $f_2=(x^2+y,xy)$, $C(f_2)=\{2x^2=y\}$ is a parabola and $\Delta(f_2)=\{4x^3=27y^2\}$ is a cusp, $\dim O(f_2)=11$,
$O(f_2)$ is an affine subvariety of $\Omega(2,2)$, $\mu(f_2)=3$.

(3) $f_3=(x^2+y, y^2)$, $C(f_3)=\{4xy=0\}$ is two intersecting lines and $\Delta(f_3)=\{y(y-x^2)=0\}$ is the sum of a line and a parabola,
$\dim O(f_3)=11$, $\mu(f_3)=4$.

(4) $f_4=(x^2,y^2)$, $C(f_4)=\{4xy=0\}$ is two intersecting lines and $\Delta(f_4)=\{xy=0\}$ is also  two intersecting lines, $\dim O(f_4)=10$ and $\mu(f_4)=4$.

(5) $f_5=(x^2-x, xy)$, $C(f_5)=\{2x^2-x=0\}$ is two parallel lines and $\Delta(f_5)$ is the sum of the line $x=-1/4$ and the line $x=0$,
$\dim O(f_5)=10$, $\mu(f_5)=2$ and $f_5$ is not proper.

(6) $f_6=(x^2,xy)$, $C(f_6)=\{x^2=0\}$ is a double line and $\Delta(f_6)$ is the line $x=0$, $\dim O(f_6)=9$, $\mu(f_6)=2$ and $f_6$ is not proper.

(7) $f_7=(xy,x+y)$, $C(f_7)=\{y=x\}$ is a line and $\Delta(f_7)=\{4x-y^2=0\}$ is a parabola, $\dim O(f_7)=10$ and $\mu(f_7)=2$.

(7') $f_9=(x^2,y)$, $C(f_9)=\{x=0\}$ is a line and $\Delta(f_9)=\{x=0\}$ is also a line, $\dim O(f_9)=9$ and $\mu(f_9)=2$.

(8) $f_8=(x,xy)$, $C(f_8)=\{x=0\}$ is a line and $\Delta(f_8)$ is the line $\{ x=0\}$, $\dim O(f_8)=9$, $\mu(f_8)=1$ and $f_8$ is not proper.

(9) $f_{10}=(x^2+y,x)$, $\dim O(f_{10})=8$, $C(f_{10})$ is the empty set and $f_{10}$ is an automorphism, $\mu(f_{10})=1$.

(9') $f_{12}=(x,y)$, $\dim O(f_{12})=6$, $C(f_{12})$ is the empty set and $f_{12}$ is an automorphism, $\mu(f_{12})=1$.
\vspace{5mm}

Not generically-finite mappings:
\vspace{2mm}

(10) $f_{11}=(x^2,x)$, $\dim O(f_{11})=7$ and $C(f_{11})$ is the plane.

(10') $f_{14}=(x^2+y,0)$, $\dim O(f_{14})=7$ and $C(f_{14})$ is the plane.

(10'') $f_{16}=(x,0)$, $\dim O(f_{16})=5$ and $C(f_{16})$ is the plane.

(11) $f_{13}=(xy,0)$, $\dim O(f_{13})=8$ and $C(f_{13})$ is the plane.

(12) $f_{15}=(x^2,0)$, $\dim O(f_{15})=6$ and $C(f_{15})$ is the plane.

(13) $f_{17}=(0,0)$, $\dim O(f_{17})=2$ and $C(f_{17})$ is the plane.

\vspace{5mm}

Note that mappings $f_{7}$ and $f_{9}$ are topologically (even algebraically) equivalent. Similarly mappings $f_{10}, f_{12}$ and $f_{11}, f_{14}, f_{16}$  Hence we have $13$ different topological types and $17$ different linear types.

In Figure \ref{Picture1} we present the structure of $\Omega(2,2)$. Each row consists of orbits of given dimension, from the largest to the smallest, and a rising path joins two orbits if the smaller is contained in the closure of the larger.

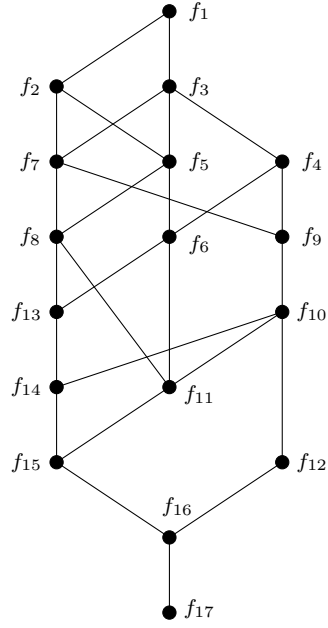
\begin{figure}[H]
                \centering
\begin{tikzpicture}
\clip(-0.16,-0.1) rectangle (4.6,8.3);
\draw (0.5,7.)-- (0.5,2.);
\draw (2.,8.)-- (2.,5.);
\draw (3.5,6.)-- (3.5,2.);
\draw (2.,1.)-- (2.,0.);
\draw (2.,5.)-- (2.,3.);
\draw (2.,8.)-- (0.5,7.);
\draw (0.5,7.)-- (2.,6.);
\draw (2.,7.)-- (0.5,6.);
\draw (2.,7.)-- (3.5,6.);
\draw (0.5,6.)-- (3.5,5.);
\draw (2.,6.)-- (0.5,5.);
\draw (3.5,6.)-- (2.,5.);
\draw (2.,5.)-- (0.5,4.);
\draw (3.5,4.)-- (2.,3.);
\draw (2.,3.)-- (0.5,2.);
\draw (0.5,2.)-- (2.,1.);
\draw (3.5,2.)-- (2.,1.);
\draw (3.5,4.)-- (0.5,3.);
\draw (0.5,5.)-- (2.,3.);
\begin{scriptsize}
\draw [fill=black] (0.5,7.) circle (2.5pt);
\draw (0.15,7.) node {$f_2$};
\draw [fill=black] (0.5,2.) circle (2.5pt);
\draw (0.1,2.) node {$f_{15}$};
\draw [fill=black] (0.5,3.) circle (2.5pt);
\draw (0.1,3.) node {$f_{14}$};
\draw [fill=black] (0.5,4.) circle (2.5pt);
\draw (0.1,4.) node {$f_{13}$};
\draw [fill=black] (0.5,5.) circle (2.5pt);
\draw (0.15,5.) node {$f_8$};
\draw [fill=black] (0.5,6.) circle (2.5pt);
\draw (0.15,6.) node {$f_7$};
\draw [fill=black] (2.,8.) circle (2.5pt);
\draw (2.4,8.) node {$f_1$};
\draw [fill=black] (2.,5.) circle (2.5pt);
\draw (2.4,4.9) node {$f_6$};
\draw [fill=black] (3.5,6.) circle (2.5pt);
\draw (3.9,6.) node {$f_4$};
\draw [fill=black] (3.5,2.) circle (2.5pt);
\draw (3.9,2) node {$f_{12}$};
\draw [fill=black] (2.,7.) circle (2.5pt);
\draw (2.4,7.) node {$f_3$};
\draw [fill=black] (2.,6.) circle (2.5pt);
\draw (2.4,6.) node {$f_5$};
\draw [fill=black] (3.5,5.) circle (2.5pt);
\draw (3.9,5) node {$f_9$};
\draw [fill=black] (3.5,4.) circle (2.5pt);
\draw (3.9,4.) node {$f_{10}$};
\draw [fill=black] (2.,0.) circle (2.5pt);
\draw (2.4,0.05) node {$f_{17}$};
\draw [fill=black] (2.,1.) circle (2.5pt);
\draw (2.109,1.4075) node {$f_{16}$};
\draw [fill=black] (2.,3.) circle (2.5pt);
\draw (2.4,2.9) node {$f_{11}$};
\end{scriptsize}
\end{tikzpicture}
                \caption{The orbits\label{Picture1}}
                \end{figure}

We also obtain similar results in the real case. We have the following possibilities (here $\Delta(f)=f(C(f)))$:

Generically-finite mappings:

(1) (the first generic case) $f_1=(x^2+y,y^2+x)$, $C(f_1)=\{4xy-1=0\}$ is a hyperbola and $\Delta(f_1)=\{2^8x^2y^2-2^8x^3-2^8y^3+2^5\cdot 9xy-27=0\}$ is a reduced and irreducible curve with a cusp at $f_1(\frac{1}{2},\frac{1}{2})$, $\dim O(f_1)=12$,
$O(f_1)$ is an open semi-algebraic subvariety of $\Omega(2,2)$.

(1a) (the second generic case)  $f_{1'}=\left(x^2-y^2+x,2xy-y\right)$ with $C(f_{1'})=\left\{4x^2+4y^2-1=0\right\}$ a circle and $\Delta(f_{1'})=\{2^{16}(x^2+y^2)^2+2^{17}(-x^3+3xy^2)+2^9\cdot 3^3\cdot 5(x^2+y^2)-(15)^3\}$ a reduced and irreducible curve with $3$ cusps at points $f_{1'}\left(\frac{1}{4},\frac{-\sqrt{3}}{4}\right)$, $f_{1'}\left(\frac{1}{4},\frac{\sqrt{3}}{4}\right)$ and $f_{1'}\left(\frac{1}{2},0\right)$, $\dim O(f_{1'})=12$,
$O(f_{1'})$ is an open semi-algebraic subvariety of $\Omega(2,2)$.

(2) $f_2=(x^2+y,xy)$, $C(f_2)=\{2x^2=y\}$ is a parabola and $\Delta(f_2)=\{4x^3=27y^2\}$ is a cusp, $\dim O(f_2)=11$,
$O(f_2)$ is a semi-algebraic subvariety of $\Omega(2,2)$, $\mu(f_2)=3$.

(3) $f_3=(x^2+y, y^2)$, $C(f_3)=\{4xy=0\}$ is two intersecting lines and $\Delta(f_3)=\{y(y-x^2)=0\}$ is the sum of a line and a parabola,
$\dim O(f_3)=11$.

the restriction of complex orbit of $f_4=\left(x^2,y^2\right)$ splits into two orbits given by $\overline{\Phi}_1(f)<0$ and $\overline{\Phi}_1(f)>0$:

(4)  the first case is represented by $f_4$ with $C(f_4)=\{4xy=0\}$ two intersecting lines and $\Delta(f_4)=\{xy=0\}$ also two intersecting lines, $\dim O(f_4)=10$.

(4a) the second case is represented by $f_{4'}=\left(x^2-y^2,xy\right)$ with $C(f_{4'})=\{(0,0)\}$ a point, $\Delta(f_{4'})=\{(0,0)\}$- a point, $\dim O(f_{4'})=10$.

(5) $f_5=(x^2-x, xy)$, $C(f_5)=\{2x^2-x=0\}$ is two parallel lines and $\Delta(f_5)$ is the sum of the line $x=-1/4$ and the point $(0,0)$,
$\dim O(f_5)=10$, $\mu(f_5)=2$ and $f_5$ is not proper.

(6) $f_6=(x^2,xy)$, $C(f_6)=\{x^2=0\}$ is a double line and $\Delta(f_6)$ is the point $(0,0)$, $\dim O(f_6)=9$, $\mu(f_6)=2$ and $f_6$ is not proper.

(7) $f_7=(xy,x+y)$, $C(f_7)=\{y=x\}$ is a line and $\Delta(f_7)=\{4x-y^2=0\}$ is a parabola, $\dim O(f_7)=10$ and $\mu(f_7)=2$.

(7') $f_9=(x^2,y)$, $C(f_9)=\{x=0\}$ is a line and $\Delta(f_9)=\{x=0\}$ is also a line, $\dim O(f_9)=9$ and $\mu(f_9)=2$.

(8) $f_8=(x,xy)$, $C(f_8)=\{x=0\}$ is a line and $\Delta(f_8)$ is the point $(0,0)$, $\dim O(f_8)=9$, $\mu(f_8)=1$ and $f_8$ is not proper.

(9) $f_{10}=(x^2+y,x)$, $\dim O(f_{10})=8$, $C(f_{10})$ is the empty set and $f_{10}$ is an automorphism, $\mu(f_{10})=1$.

(9') $f_{12}=(x,y)$, $\dim O(f_{12})=6$, $C(f_{12})$ is the empty set and $f_{12}$ is an automorphism, $\mu(f_{12})=1$.
\vspace{5mm}

Not generically-finite mappings:
\vspace{2mm}

(10) $f_{11}=(x^2,x)$, $\dim O(f_{11})=7$ and $C(f_{11})$ is the plane.

(10') $f_{14}=(x^2+y,0)$, $\dim O(f_{14})=7$ and $C(f_{14})$ is the plane.

(10'') $f_{16}=(x,0)$, $\dim O(f_{16})=5$ and $C(f_{16})$ is the plane.

(11) $f_{13}=(xy,0)$, $\dim O(f_{13})=8$ and $C(f_{13})$ is the plane.

(12) $f_{15}=(x^2,0)$, $\dim O(f_{15})=6$ and $C(f_{15})$ is the plane.

(13) $f_{17}=(0,0)$, $\dim O(f_{17})=2$ and $C(f_{17})$ is the plane.

\vspace{5mm}

Thus we have $15$ different topological real types and $19$ different linear real types.

\section{Main Result}\label{secMR}

Let us recall the following:

\begin{defi}\label{defi}
If $f:\C^n\to\C^n$ is a generically-finite regular
mapping, then $\Delta(f)=\{ x\in \C^n : \# f^{-1}(x)\not=\mu(f)\}$ is
called the discriminant of $f$. In particular, if $f$ is proper, then
$\Delta(f)=f(C(f))$, where $C(f)$ is the critical set of $f$. The set $\Delta(f)$ is either a hypersurface or the empty set
(see e.g. \cite{j-k}).
\end{defi}

We also use the following:

\begin{defi}
By $GA(2,2)$ we denote the group of affine transformations of $\C^2$. By ${\mathcal GA(2,2)}$ we denote the group
$GA(2,2)\times GA(2,2)$ with multiplication $(L_1,R_1)\circ (L_2,R_2)=(L_1L_2, R_2R_1)$. The group ${\mathcal GA(2,2)}$
acts on the set $\Omega(2,2)$: $(L,R)f=L\circ f\circ R$. We denote the orbit of $f\in\Omega(2,2)$ by $O(f)$. We say that $f_1,f_2\in \Omega(2,2)$ are linearly equivalent, if there is a $\alpha\in {\mathcal GA(2,2)}$ such that $f_2=\alpha f_1$, i.e. $f_2\in O(f_1)$.
\end{defi}

We denote by $\Stab(f)$ the stabilizer of $f$. Note that $\dim\Stab(f)+\dim O(f)=\dim{\mathcal GA(2,2)}=12$. Moreover if $(L,R)\in\Stab(f)$ and the image of $f$ is not contained in a linear subspace of $\C^2$ then $R$ uniquely determines $L$.

Let $f_1=(x^2+y,y^2+x)$. We have:

\begin{theo}\label{th1}
Let $f\in \Omega(2,2)$. The following conditions are equivalent:

\begin{enumerate}
\item $C(f)$ is a hyperbola.
\item $O(f)$ is an open dense subset of $\Omega(2,2)$.
\item $O(f)=O(f_1)$, i.e. $f$ is linearly equivalent to $f_1$.
\end{enumerate}
\noindent Moreover, the maximal orbit $O(f_1)$ is a smooth affine variety of Euler characteristic $0$ and dimension $12$.
\end{theo}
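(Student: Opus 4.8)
The plan is to prove the three-way equivalence by establishing a cycle of implications, and then to analyze the orbit $O(f_1)$ directly.

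First I would verify $(3)\Rightarrow(1)$ by an explicit computation. For $f_1=(x^2+y,y^2+x)$ the Jacobian is $\det\begin{pmatrix}2x & 1\\ 1 & 2y\end{pmatrix}=4xy-1$, so $C(f_1)=\{4xy-1=0\}$ is indeed a hyperbola. The key observation is that linear equivalence acts on the critical set in a controlled way: if $g=L\circ f\circ R$ with $L,R$ affine isomorphisms, then by the chain rule $\det Dg = (\det L)(\det f\circ R)(\det R)$, so $C(g)=R^{-1}(C(f))$ up to the multiplicative constant. Since $R^{-1}$ is an affine isomorphism, it carries a hyperbola to a conic of the same projective type, hence to a hyperbola (the affine/projective type of a smooth conic is preserved under affine transformations of $\C^2$). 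Thus every member of $O(f_1)$ has a hyperbola as its critical set.

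\smallskip

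\emph{Next I would prove $(1)\Rightarrow(3)$}, which I expect to be the main obstacle and the technical heart of the argument. Suppose $C(f)$ is a hyperbola. The critical set of a quadratic $f=(f_1,f_2)$ is the zero locus of the Jacobian $J(f)$, which is a polynomial of degree at most $2$. The strategy is a normalization: using the left action (post-composition by $L\in GA(2,2)$) and the right action (pre-composition by $R\in GA(2,2)$) one brings $f$ to the model form $(x^2+y,y^2+x)$. Concretely, since $C(f)$ is a nondegenerate conic, an affine change $R$ of the source puts $C(f)$ into the standard form $\{4xy-1=0\}$; one then uses the remaining freedom in $L$ and in the stabilizer of this conic to reduce the quadratic parts of $f_1,f_2$ and cancel lower-order terms. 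The delicate point is to show that after fixing the critical locus there is enough residual group action to reach $f_1$ exactly, and that no genuinely distinct orbit can also have a hyperbola as critical set — this is where one must rule out the degenerate cases enumerated as $f_2,\dots,f_{17}$, whose critical sets are parabolas, pairs of lines, double lines, single lines, or the whole plane, none of which is a hyperbola. This case analysis is precisely what the full classification underlying the theorem provides.

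\smallskip

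For the implication $(3)\Rightarrow(2)$ and the final ``Moreover'' clause, I would compute $\dim O(f_1)$ via the stabilizer. Since $\dim O(f_1)+\dim\Stab(f_1)=\dim{\mathcal GA(2,2)}=12$, it suffices to show $\Stab(f_1)$ is finite, i.e. $\dim\Stab(f_1)=0$. Using the remark that $R$ determines $L$ when the image is not contained in a line, I would parametrize $(L,R)\in\Stab(f_1)$ by $R$ alone and solve the equation $L\circ f_1\circ R=f_1$; the symmetry $x\leftrightarrow y$ of $f_1$ together with the rigidity of its cuspidal discriminant forces $R$ into a finite group (the order-$3$ cyclic symmetry visible in the three cusps at $f_1(\varepsilon/2,\varepsilon^2/2)$, $\varepsilon^3=1$). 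Hence $\dim O(f_1)=12$, and an orbit of full dimension $12=\dim\Omega(2,2)$ is automatically open; openness plus irreducibility of the orbit (as a homogeneous space) gives density, establishing $(3)\Rightarrow(2)$, while $(2)\Rightarrow(3)$ follows because two distinct $12$-dimensional orbits would both be open and dense, hence would intersect, forcing them to coincide. Smoothness of $O(f_1)$ is automatic as it is a homogeneous space for the algebraic group ${\mathcal GA(2,2)}$. Finally I would compute the Euler characteristic: realizing $O(f_1)\cong{\mathcal GA(2,2)}/\Stab(f_1)$ and using multiplicativity of $\chi$ for the fibration, $\chi(O(f_1))=\chi({\mathcal GA(2,2)})/\#\Stab(f_1)$; since ${\mathcal GA(2,2)}$ contains a nontrivial torus factor (from the determinants / scaling directions in the affine groups), $\chi({\mathcal GA(2,2)})=0$, and therefore $\chi(O(f_1))=0$.
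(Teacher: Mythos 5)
There is a genuine gap, and it sits exactly where you predicted: the implication $(1)\Rightarrow(3)$. Your plan is to normalize an arbitrary $f$ with hyperbola critical set down to $(x^2+y,y^2+x)$ by explicit reductions, but you never carry out this computation; instead you fall back on two substitutes, neither of which works. First, ``ruling out the degenerate cases $f_2,\dots,f_{17}$'' does not close the argument: the issue is not whether those particular orbits have hyperbolic critical sets (they visibly do not), but whether there could exist some \emph{other} orbit, not on any list, whose critical set is also a hyperbola. Second, appealing to ``the full classification underlying the theorem'' is circular, since in the paper the classification is built \emph{on top of} this theorem, not the other way around. So as written, the cycle of implications is not closed: you have $(3)\Rightarrow(1)$, $(3)\Rightarrow(2)$, and $(2)\Rightarrow(3)$ (your finite-stabilizer computation for $f_1$ itself, plus the fact that two open dense orbits must intersect, are fine), but nothing connects condition $(1)$ back to the rest.

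The paper's proof avoids the normalization entirely by proving $(1)\Rightarrow(2)$ for an \emph{arbitrary} $f$ with hyperbolic critical set: it shows $\Stab(f)$ is finite without ever putting $f$ in normal form. The key point you are missing is how to produce, from the hypothesis alone, a finite nonempty set of distinguished points on the hyperbola that any $(L,R)\in\Stab(f)$ must preserve. The paper gets this from a topological argument: if $f|_{C(f)}$ had no critical point and $\Delta(f)$ were smooth, then $\Delta(f)$ would be isomorphic to a hyperbola, $f^{-1}(\Delta(f))$ would have Euler characteristic $0$, and the covering formula $\mu(f)\,\chi(\C^2\setminus\Delta(f))=\chi(\C^2\setminus f^{-1}(\Delta(f)))=1$ would force $\mu(f)=1$, a contradiction. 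Hence either $f|_{C(f)}$ has a critical point or $\Delta(f)$ is singular, and in either case $R$ must preserve the hyperbola together with a nonempty finite subset of it; only finitely many affine maps do that. Finiteness of $\Stab(f)$ gives $\dim O(f)=12$, so $O(f)$ is open and dense, which is $(2)$; then $(2)\Rightarrow(3)$ is immediate because $O(f)$ and $O(f_1)$, both open and dense, must meet. If you want to salvage your direct normalization route for $(1)\Rightarrow(3)$, you would have to actually perform the reduction of all coefficients after fixing $C(f)=\{4xy=1\}$, which is considerably more work than the paper's dimension-count shortcut. Your treatment of the ``moreover'' clause (stabilizer of order $6$, covering argument, $\chi({\mathcal GA(2,2)})=0$ via the $\C^*$ factor) matches the paper and is correct.
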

\begin{proof}

(1) $\Rightarrow$ (2) First note that either the mapping $f|_{C(f)}$ has a critical point or $\Delta(f)$ is a singular curve.
Indeed, in other case $\Delta(f)$ is isomorphic to a hyperbola and the curve $f^{-1}(\Delta(f))$ has the Euler characteristic equal to $0$ (as a covering of hyperbola). In particular $\mu(f)\chi(\C^2\setminus\Delta(f))=\chi(\C^2\setminus  f^{-1}(\Delta(f)))=1$ and $\mu(f)=1$, a contradiction.

Now we will show that $\Stab(f)$ is finite. If $(L,R)\in\Stab(f)$ then $R$ must preserve $C(f)$, all singular points of $f|_{C(f)}$ and all pre-images of singular points of $\Delta(f)$. Hence $R$ has to preserve a hyperbola and some non-empty finite subset of the hyperbola. It is easy to see that there is only a finite number of such linear mappings $R$.

Since $\Stab(f)$ is finite we know that $\dim O(f)=\dim\Omega(2,2)$ and $O(f)$ is a maximal orbit, in particular it is open and dense.

(2) $\Rightarrow$ (3) It is easy to check that $C(f_1)$ is a hyperbola, hence ${\mathcal GA(2,2)}f_1$ is open and dense. In particular $O(f)\cap O(f_1)\not=\emptyset$.

(3) $\Rightarrow$ (1) Obvious.

Now we will describe the set $\Stab(f_1)$. Let us note that $C(f_1)=\{(x,y):4xy=1\}.$ Hence $f_1$ has exactly three $A_{1,1}$ singularities $P_i=(-1/2\alpha, -1/2\alpha^2)$, where $\alpha^3=1.$ Note that the linear automorphisms which preserve $C(f_1)$
and $P_i$ for $i=1,2,3$ are exactly the mappings $(x,y), (\alpha x,\alpha^2 y), (\alpha^2x, \alpha y)$ and $(y,x), (\alpha y,\alpha^2 x), (\alpha^2 y, \alpha x)$. Hence $\Stab(f_1)$ has exactly $6$ elements. In order to calculate the Euler characteristic of the maximal orbit $O={\mathcal GA(2,2)}f_1$ note that the mapping $\Psi\colon{\mathcal GA(2,2)}\ni(L,R)\mapsto L\circ f_1\circ R\in O$ is a topological covering of degree $6$. Hence $\chi({\mathcal GA(2,2)})=6\chi(O)$. Since $GA(2,2)$ is homotopic to $\C^4$ minus a cone, it has Euler characteristic equal to $0$. Since ${\mathcal GA(2,2)}=GA(2,2)\times GA(2,2)$ we have $\chi({\mathcal GA(2,2)})=0$. Hence $\chi(O)=0$.
\end{proof}

\begin{re}
{\rm We say that $f\in \Omega(2,2)$ is general if $O(f)$ is dense in $\Omega(2,2)$. Hence a general mapping is equivalent to $(x^2+y, y^2+x)$. Now assume that $f\in \Omega(2,2)$ is not a general mapping. From the proof of Theorem \ref{th1} we conclude that there are infinite algebraic groups of affine mappings of $\C^2$, which preserve $C(f)$, $\Delta(f)$ and $f^{-1}(\Delta(f))$. Hence the curves  $C(f)$, $\Delta(f)$, $f^{-1}(\Delta(f))$ are very special. In particular, the curve $f^{-1}(\Delta(f))$ is rational. }
\end{re}

\begin{re}
{\rm From our classification we have that a generic mapping is general. In other words a generic mapping  is linearly equivalent to $(x^2+y,y^2+x)$. }
\end{re}

Now we will describe the other orbits.
First we will show that there are exactly two orbits of dimension $11$. Let $f\in \Omega(2,2)$, we can write $f=(g,h)$ where
$$g=a_1x^2+b_1xy+c_1y^2+d_1x+e_1y+f_1,$$ $$h=a_2x^2+b_2xy+c_2y^2+d_2x+e_2y+f_2.$$
Let $A=2a_1b_2-2a_2b_1$, $B=4a_1c_2-4a_2c_1$, $C=2b_1c_2-2b_2c_1$, $D=2a_1e_2+d_1b_2-2a_2e_1-d_2b_1$, $E=2d_1c_2+b_1e_2-2d_2c_1-b_2e_1$ and $F=d_1e_2-d_2e_1$, the Jacobian of $f$ is given by the formula:
$$J(f)=Ax^2+Bxy+Cy^2+Dx+Ey+F.$$
Consider the following two matrices:

\vspace{5mm}

$$\Phi_1(f)= \left[\begin{matrix} 2A & B \\ B & 2C \end{matrix}\right] \quad\text{and}\quad
\Phi_2(f)= \left[\begin{matrix} 2A & B & D\\ B & 2C & E \\
D & E & 2F\end{matrix}\right]$$

Note that polynomial $\overline{\Phi}_1:=\det\Phi_1$ is a quadratic function of $c_1$ and $\overline{\Phi}_2:=\det\Phi_2$ is a quadratic function of $d_1$.
Using the obvious lemma below it is easy to check that the polynomials $\overline{\Phi}_1$ and $\overline{\Phi}_2$ are irreducible.

\begin{lem}
Let $\Phi= a(x)y^2+b(x)y+c(x)\in \C[x_1,\ldots,x_n,y]$, where $\gcd(a(x),b(x),c(x))=1$.
Then $\Phi$ is reducible if and only if $b^2-4ac$ is a square in $\C[x_1,\ldots,x_n]$.
\end{lem}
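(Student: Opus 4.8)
The plan is to work over the UFD $R=\C[x_1,\ldots,x_n]$ with fraction field $K=\C(x_1,\ldots,x_n)$, viewing $\Phi$ as an element of $R[y]=\C[x_1,\ldots,x_n,y]$. I would treat the relevant nondegenerate case $a\neq 0$, so that $\deg_y\Phi=2$; here the hypothesis $\gcd(a,b,c)=1$ says precisely that $\Phi$ is a \emph{primitive} polynomial in $R[y]$.

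First I would invoke Gauss's Lemma: since $R$ is a UFD and $\Phi$ is primitive of positive degree in $y$, $\Phi$ is reducible in $R[y]$ if and only if it is reducible in $K[y]$. This reduces the question to a statement about a single quadratic over a field. Over $K$ (of characteristic zero, in particular $\neq 2$) a polynomial $ay^2+by+c$ with $a\neq 0$ is reducible if and only if it has a root in $K$, which by the quadratic formula happens exactly when its discriminant $b^2-4ac$ is a square in $K$: if $b^2-4ac=\delta^2$ then $\Phi=a(y-r_+)(y-r_-)$ with $r_\pm=(-b\pm\delta)/(2a)$, and conversely any factorization into linear factors over $K$ produces such a $\delta\in K$.

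The one step requiring a little care — the main obstacle, though it remains routine — is the descent from $K$ to $R$: I must verify that an element $b^2-4ac\in R$ is a square in $K$ if and only if it is already a square in $R$. This holds because $R$, being a UFD, is integrally closed in $K$: any $\delta\in K$ with $\delta^2=b^2-4ac\in R$ is integral over $R$ and hence lies in $R$. Equivalently, writing $\delta=p/q$ in lowest terms and clearing denominators in $p^2=(b^2-4ac)q^2$, unique factorization forces $q$ to be a unit. Combining the three steps yields the asserted equivalence.

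Finally I would note that the degenerate case $a=0$ is excluded from the statement as stated, since it is not needed for the intended application: for $\overline{\Phi}_1$ and $\overline{\Phi}_2$ the leading coefficient in the distinguished variable ($c_1$, respectively $d_1$) is a nonzero polynomial in the remaining coefficients, so the lemma is always applied in the genuinely quadratic regime $a\neq 0$.
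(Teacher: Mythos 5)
Your proof is correct. The paper itself offers no argument to compare against: the lemma is invoked as ``the obvious lemma below'' and stated without proof, and your three-step argument --- Gauss's lemma to pass from $R[y]$ to $K[y]$ for the primitive polynomial $\Phi$, the quadratic formula over the field $K$ of characteristic zero, and integral closedness of the UFD $R$ to descend the square root of $b^2-4ac$ from $K$ to $R$ --- is exactly the standard reasoning the authors presumably had in mind. One remark on your final paragraph: the restriction to $a\neq 0$ is not merely a matter of the intended application, it is necessary for the truth of the statement, since for $a=0$ and $b\neq 0$ the lemma as literally written is false (e.g.\ $\Phi=x_1y+1$ is irreducible, yet $b^2-4ac=x_1^2$ is a square); so your reading of the lemma as implicitly assuming $\deg_y\Phi=2$ is not a convenience but the only reading under which the claim holds, and it is indeed the regime in which the paper applies it to $\overline{\Phi}_1$ and $\overline{\Phi}_2$.
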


Obviously the critical set $C(f)$ is a quadric if and only if $\rank\Phi_1>0$. In this case $\overline{\Phi}_1(f)=0$ if and only if $C(f)$ has only one point at infinity, moreover $\overline{\Phi}_2(f)=0$ if and only if the projective curve $\overline{C(f)}$ is singular, i.e. $C(f)$ is a sum of two lines, finally, $\rank\Phi_2(f)=1$ if and only if $C(f)$ is a double line. In particular $O(f_1)=\Omega(2,2)\setminus (\{\overline{\Phi}_1=0\}\cup
\{\overline{\Phi}_2=0\})$, which confirms in a different way that $O(f_1)$ is an affine variety. Now we will describe the set $\{\overline{\Phi}_1=0\}\setminus \{\overline{\Phi}_2=0\}$. Let $f_2=(x^2+y,xy)$, we have:

\begin{theo}
The following conditions are equivalent:

\begin{enumerate}
\item $O(f)=\{\overline{\Phi}_1=0\}\setminus \{\overline{\Phi}_2=0\}$.
\item $C(f)$ is a parabola.
\item $O(f)=O(f_2),$ i.e. $f$ is linearly equivalent to $f_2$.
\end{enumerate}
\noindent In particular the orbit $O(f_2)$ is a smooth affine variety of dimension $11$.
\end{theo}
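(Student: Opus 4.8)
The plan is to identify the locus $P:=\{\overline{\Phi}_1=0\}\setminus\{\overline{\Phi}_2=0\}$ with the set of all $f\in\Omega(2,2)$ whose critical set is a parabola, and then to prove that this set is a single orbit, namely $O(f_2)$. The equivalence $(1)\Leftrightarrow(2)$ is already essentially contained in the discussion preceding the theorem: on $\{\overline{\Phi}_1=0\}$ one necessarily has $\rank\Phi_1=1$ whenever $\overline{\Phi}_2\neq0$, since the case $\rank\Phi_1=0$ (i.e. $A=B=C=0$) forces $\det\Phi_2=0$, so $C(f)$ is a genuine conic. The condition $\overline{\Phi}_1=0$ says that $C(f)$ meets the line at infinity in a single point, and $\overline{\Phi}_2\neq0$ says that $\overline{C(f)}$ is smooth; together these hold exactly when $C(f)$ is a parabola. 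Hence $P$ coincides with the parabola locus.

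Next I would record the invariance needed for $(3)\Rightarrow(2)$. For $g=(L,R)f=L\circ f\circ R$ one has $C(g)=R^{-1}(C(f))$, because the linear parts of $L$ and $R$ are invertible and contribute only nonzero constant factors to the Jacobian. Since $R$ is an affine isomorphism it preserves the line at infinity and hence carries parabolas to parabolas; thus being a parabola is an orbit invariant and $P$ is $\mathcal{GA}(2,2)$-invariant. A direct computation gives $C(f_2)=\{2x^2-y=0\}$, which is a parabola, so $O(f_2)\subseteq P$; this proves $(3)\Rightarrow(2)$.

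The heart of the argument is $(2)\Rightarrow(3)$: every $f$ with $C(f)$ a parabola is linearly equivalent to $f_2$. First I would use the right action to standardise the critical curve, choosing $R_1$ with $R_1^{-1}(C(f))=\{2x^2-y=0\}$, so that after replacing $f$ by $f\circ R_1$ we may assume $J(f)=c(2x^2-y)$ for some $c\neq0$. Writing $f=(g,h)$, the degree-two part of this identity says that the Jacobian of the leading quadratic forms $(g_2,h_2)$ is the double line $2c\,x^2$. I would then run the reduction in two stages: use the linear part of $L$, which acts on the pair $(g_2,h_2)$ through $GL_2$ by linear combinations, together with the linear part of the affine maps preserving the standard parabola, to bring $(g_2,h_2)$ to the normal form $(x^2,xy)$; and then use the remaining translational freedom in the source (parabola-preserving) and in the target to clear the lower-order terms, arriving at $f_2$. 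The condition $\overline{\Phi}_2\neq0$ guarantees that we stay in the generic stratum of the parabola locus, excluding the degenerations where $C(f)$ splits into lines, which is what makes the reduction uniform. The main obstacle is the bookkeeping in this last step: one must check that the constraints imposed by $J(f)=c(2x^2-y)$ leave just enough group freedom to kill every remaining coefficient and that no competing normal form survives, so that the whole parabola locus is one orbit rather than a union of several.

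Finally, for the closing assertion, $P=O(f_2)$ is affine because it is the principal open subset $D(\overline{\Phi}_2)$ of the affine hypersurface $\{\overline{\Phi}_1=0\}\subset\Omega(2,2)\cong\C^{12}$; since $\overline{\Phi}_1$ is irreducible this hypersurface is irreducible of dimension $11$, and $P$ is a nonempty open subset, so $\dim P=11$. As an orbit of an algebraic group action $O(f_2)$ is automatically smooth. The dimension can also be confirmed intrinsically by computing $\Stab(f_2)$: the one-parameter family $(\,\mathrm{diag}(\lambda^{-2},\lambda^{-3}),\mathrm{diag}(\lambda,\lambda^2)\,)$ lies in $\Stab(f_2)$, while the shear directions of the parabola stabiliser fail to extend, since they reintroduce a linear term in the first coordinate; hence $\dim\Stab(f_2)=1$ and $\dim O(f_2)=12-1=11$, matching $\dim P$.
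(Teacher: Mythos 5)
Your reduction of the statement to the single-orbit claim is correctly organized: the identification of $\{\overline{\Phi}_1=0\}\setminus\{\overline{\Phi}_2=0\}$ with the parabola locus (including the observation that $\rank\Phi_1=0$ forces $\det\Phi_2=0$), the orbit-invariance of ``$C(f)$ is a parabola'', and the closing remarks on affineness, smoothness and $\dim\Stab(f_2)=1$ are all sound. But the implication $(2)\Rightarrow(3)$ is the entire content of the theorem, and there your proposal stops at a plan: you normalize $C(f)=\{2x^2=y\}$, announce a two-stage reduction, and then explicitly defer ``the bookkeeping in this last step'' as the main obstacle. That deferred step is not routine verification --- it is exactly where the theorem lives. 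Concretely, after imposing $J(f)=2x^2-y$ and $(g_2,h_2)=(x^2,xy)$ one is still left with the three-parameter family $f=(x^2-2ex+y+c_1,\;xy-2e^2x+ey+c_2)$, and after translations with the one-parameter family $(x^2+y,\;xy+\lambda y)$; collapsing this family into one orbit requires a simultaneous, non-obvious choice of source and target maps. Moreover your stated inventory of remaining freedom (source parabola-preserving maps plus \emph{translations} in the target) is insufficient: taking the source map $R(x,y)=(\alpha x+\gamma,\,4\alpha\gamma x+\alpha^2y+2\gamma^2)$ with $\gamma=e/3$ gives $g\circ R=\alpha^2(x^2+y)+\mathrm{const}$ but $h\circ R=\tfrac{4}{3}\alpha^2e\,(x^2+y)+\alpha^3xy+\mathrm{const}$, so one must also use a \emph{shear} in the target to remove the $(x^2+y)$-summand from the second component. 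The computation does close up (the choice $\gamma=e/3$ works, and the unwanted linear terms cancel identically), so your approach is viable --- but as written the decisive step is asserted, not proved.

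It is worth noting that the paper proves $(2)\Rightarrow(3)$ by a completely different, soft argument that avoids all coefficient-chasing: it first shows $f$ is proper (a non-proper $f$ would contract $C(f)$ to a point, forcing $J(f)\equiv 0$), then that $\Delta(f)$ must be singular (otherwise $\Delta(f)\cong\C$ and, by the classification in \cite{jel}, $f$ would be equivalent to $(x^r,y)$, whose critical set is not a parabola). A singular discriminant forces $\dim\Stab(f)\le 1$, and non-genericity of $f$ forces $\dim\Stab(f)\ge 1$; hence $\dim O(f)=11$, so both $O(f)$ and $O(f_2)$ are open and dense in the irreducible hypersurface $\{\overline{\Phi}_1=0\}$ and therefore meet, giving $O(f)=O(f_2)$. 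That argument trades your explicit normal form for two external inputs plus the irreducibility of $\overline{\Phi}_1$. To repair your write-up you must either carry out the reduction sketched above to its end, or replace it by a density argument of this kind.
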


\begin{proof}
(1) $\Rightarrow$ (2) It is obvious.

(2) $\Rightarrow$ (3) First observe that $f=(g,h)$ is a proper mapping. Indeed, if $f$ is not proper, then  by \cite{jel1} the curve $C(f)$ must be contracted by $f$ to some point $(a,b)$. Hence $g-a,h-b$ are proportional to the equation of $C(f)$ and $J(f)$ vanishes on $\C^2$, a contradiction.

We claim that $\Delta(f)$ is not smooth. In the other case $\Delta(f)\cong \C$ and by \cite{jel} the mapping $f$ is algebraically equivalent to $(x^r,y)$. Hence $f$ must be linearly equivalent to $(x^2,y)$ and $C(f)$ is not a parabola,  a contradiction. Consequently the curve $\Delta(f)$ has a singular point. This implies that the group of linear mappings which stabilize $\Delta(f)$ has dimension at most $1$. On the other hand $f$ is not generic so the group $\Stab(f)$ in ${\mathcal GA(2,2)}$ has dimension exactly one. Hence $\dim O(f)=11=\dim\{\overline{\Phi}_1=0\}$. This means that $O(f)$ is open and dense in $\{\overline{\Phi}_1=0\}$. By the same reason $O(f_2)$ is open and dense in $\{\overline{\Phi}_1=0\}$. Since the hypersurface $\{\overline{\Phi}_1=0\}$ is irreducible, we have $O(f)\cap O(f_2)\not=\emptyset$. Hence $O(f_2)=O(f)$.

(3) $\Rightarrow$ (1) It is obvious.
\end{proof}

\begin{re}
{\rm Note that the mapping $f_2$ is a pattern of a cusp singularity.}
\end{re}

Our next aim is to describe the set $\{\overline{\Phi}_2=0\}\setminus \{\overline{\Phi}_1=0\}$. Let $f_3=(x^2+y, y^2)$. We have:

\begin{theo}
The following conditions are equivalent:

\begin{enumerate}
\item $C(f)$ is isomorphic to a cross $K_2=\{ xy=0\}$ and $\Delta(f)$ contains a parabola.
\item $O(f)$ is dense in $\{\overline{\Phi}_2=0\}\setminus \{\overline{\Phi}_1=0\}$.
\item $O(f)=O(f_3)$, i.e. $f$ is linearly equivalent to $f_3$.
\end{enumerate}

\noindent In particular the orbit $O(f_3)$ is a smooth variety of dimension $11$.
\end{theo}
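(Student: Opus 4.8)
The plan is to prove the cycle of implications $(3)\Rightarrow(1)\Rightarrow(2)\Rightarrow(3)$, following the scheme used for $f_2$. First I would dispose of $(3)\Rightarrow(1)$ by direct computation: for $f_3=(x^2+y,y^2)$ one has $J(f_3)=4xy$, so $C(f_3)=\{xy=0\}$ is a cross, and, $f_3$ being proper, $\Delta(f_3)=f_3(C(f_3))=\{y=0\}\cup\{y=x^2\}$ contains the parabola obtained as the image of the line $\{x=0\}$. Since $C((L,R)f)=R^{-1}(C(f))$ and $\Delta((L,R)f)=L(\Delta(f))$, and since an affine isomorphism carries a cross to a cross and a parabola to a parabola, both properties in $(1)$ are invariant along $O(f_3)$; this gives $(3)\Rightarrow(1)$.

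For $(2)\Rightarrow(3)$ I would argue by density in the irreducible hypersurface $\{\overline{\Phi}_2=0\}$, exactly as for $f_2$. Recall from the discussion preceding the theorem that $\{\overline{\Phi}_2=0\}$ is irreducible of dimension $11$, that a cross $C(f)$ forces $\overline{C(f)}$ to be singular (so $\overline{\Phi}_2(f)=0$) while having two distinct points at infinity (so $\overline{\Phi}_1(f)\ne0$), and that affine isomorphisms send crosses to crosses; hence $O(f_3)\subseteq\{\overline{\Phi}_2=0\}\setminus\{\overline{\Phi}_1=0\}$. A short computation of the stabilizer shows $\Stab(f_3)=\{(L_\lambda,R_\lambda):R_\lambda=(\lambda x,\lambda^2y),\ L_\lambda=(\lambda^{-2}u,\lambda^{-4}v),\ \lambda\in\C^*\}\cong\C^*$, so $\dim O(f_3)=12-1=11$ and $O(f_3)$ is dense in $\{\overline{\Phi}_2=0\}$. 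Since the set in $(2)$ is dense open in $\{\overline{\Phi}_2=0\}$, condition $(2)$ makes $O(f)$ dense in the same irreducible variety; two orbits dense in an irreducible variety must meet, whence $O(f)=O(f_3)$.

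The substantial implication is $(1)\Rightarrow(2)$, where the geometric data must be upgraded to density of the orbit; equivalently I must show $\dim\Stab(f)=1$. As above, $C(f)$ being a cross already places $f$ in $\{\overline{\Phi}_2=0\}\setminus\{\overline{\Phi}_1=0\}$, so it remains to prove $\dim O(f)=11$. First, $f$ is proper: otherwise, by \cite{jel1}, $C(f)$ is contracted to a point, forcing the coordinates of $f$ to be proportional to the equation of $C(f)$ and $J(f)\equiv0$, absurd since $C(f)$ is a genuine cross. Thus $\Delta(f)=f(C(f))$. By Theorem \ref{th1}, $f$ is not generic (its critical set is a cross, not a hyperbola), so $\dim\Stab(f)\ge1$. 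For the reverse bound I would normalize by $R$ so that $C(f)=\{xy=0\}$ and the parabola $P\subseteq\Delta(f)$ is $P=f(\{x=0\})$. Any $(L,R)$ in the identity component of $\Stab(f)$ fixes the unique singular point of the cross and each of its branches, so $R=(\alpha x,\beta y)$; the relation $L\circ f=f\circ R^{-1}$ then forces $L$ to preserve $P$ and to act on it, through the parametrization $t\mapsto f(0,t)$, as the scaling $t\mapsto\beta^{-1}t$. As $P$ spans the plane, this determines $L$ from $\beta$, and compatibility with the quadratic and linear parts of $f$ imposes a single relation (of the shape $\beta=\alpha^2$). Hence $\dim\Stab(f)\le1$, so $\dim\Stab(f)=1$ and $O(f)$ is an $11$-dimensional, hence dense, subset of $\{\overline{\Phi}_2=0\}$, which is $(2)$.

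The main obstacle is precisely this stabilizer bound: the hypothesis ``$C(f)$ is a cross'' alone yields only $\dim\Stab(f)\le2$ — the cross has a two-dimensional group of affine symmetries, as witnessed by $f_4=(x^2,y^2)$ with $\dim O(f_4)=10$ — and it is the extra assumption that $\Delta(f)$ contains a parabola that must be leveraged to remove one dimension. The delicate point is to make the ``intertwining via the parametrization of $P$'' argument uniform over all $f$ satisfying $(1)$, irrespective of what the second component $\Delta(f)\setminus P$ happens to be; the finitely many symmetries swapping the two branches of the cross do not affect the dimension count and may be ignored. Finally, the ``moreover'' statement is immediate: $O(f_3)$ is an orbit of the algebraic group $\mathcal{GA}(2,2)$, hence a smooth locally closed subvariety, of dimension $11$.
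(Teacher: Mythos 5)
Your overall strategy is the same as the paper's (orbit--stabilizer dimension count, density in the irreducible hypersurface $\{\overline{\Phi}_2=0\}$, two dense orbits must meet), and your $(3)\Rightarrow(1)$ and $(2)\Rightarrow(3)$ are fine --- in fact computing $\Stab(f_3)$ directly is a nice self-contained way to get $\dim O(f_3)=11$. But the implication $(1)\Rightarrow(2)$, which is the substance of the theorem, has two genuine gaps. First, your properness argument is invalid as stated: the claim ``not proper $\Rightarrow$ $C(f)$ is contracted to a point'' (borrowed from the paper's parabola case) only works when $C(f)$ is \emph{irreducible}. For a cross, the cited contraction principle would at best give that \emph{one} of the two lines is contracted, so $g-a$ and $h-b$ merely share a linear factor, and this does not force $J(f)\equiv 0$: the map $(x^2,xy)$ contracts the line $\{x=0\}$ yet has $J=2x^2\not\equiv 0$. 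The paper avoids this entirely by a direct argument at infinity: writing the leading form of $J(f)$ as $xy$, it shows the two components of $f$ can have no common zero at infinity (a common zero forces $a_1=a_2=0$, hence $A=B=0$, contradicting the leading form $xy$), whence $\mu(f)=4$ and $f$ is proper.

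Second, and more seriously, your stabilizer bound $\dim\Stab(f)\le 1$ rests on the assertion that ``compatibility with the quadratic and linear parts of $f$ imposes a single relation (of the shape $\beta=\alpha^2$)'' --- which is precisely the point at issue, and you yourself flag it as delicate without proving it. Constraining $R$ to the diagonal torus and determining $L$ from its action on the parabola $P=f(\{x=0\})$ only shows that $\Stab(f)^0$ embeds into $(\C^*)^2$; to cut this down to one dimension you must use the \emph{second} branch of the cross, i.e.\ the second component of $\Delta(f)$, and rule out degenerate configurations for it. This is exactly what the paper establishes and you skip: it proves $\Delta(f)=P\cup P_1$ with $P_1\neq P$ a line or parabola (if $\Delta(f)$ were the smooth parabola $P$ alone, then $\Delta(f)\cong\C$ and by \cite{jel} $f$ would be equivalent to $(x^r,y)$, whose critical set is not a cross --- note also that $P\cap P_1\neq\emptyset$ since the cross is connected, so $\Delta(f)$ is). Then any $(L,R)\in\Stab(f)$ forces $L$ to preserve the pair $(P,\,P\cap P_1)$, and the affine maps doing so form the one-parameter group $(ax,a^2y)$; since $L$ determines $R$ up to finitely many choices, $\dim\Stab(f)\le 1$, and non-genericity (Theorem \ref{th1}) gives equality. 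Without the existence and use of $P_1$, your relation between $\alpha$ and $\beta$ is unsupported, so the key dimension bound is not proved.
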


\begin{proof}
(1) $\Rightarrow$ (2) Note that $f$ is a proper mapping. If $f=(g,h)$ then $f=0$ and $g=0$ have no common points at infinity.
Indeed, we can assume that $C(f)=K_2$. Hence the common zeroes of $g$ and $h$ at the line at infinity are of the form $(1:0)$ or $(0:1)$. By symmetry it is enough to consider the first case. Hence $a_1=a_2=0$ and consequently $B=0$. This implies that $J(f)\not=xy$. This contradiction shows that the curves $g=a$,$h=b$ have no common points at infinity, hence $\mu(f)=4$ and $f$ is proper.

Note that $\Delta(f)=P\cup P_1$, where $P$ is a parabola and $P_1\not=P$ is a line or a parabola. Indeed, we can assume that $P=\{y=x^2\}$. Now, if $\Delta(f)=P$ then by \cite{jel} $f$ is algebraically equivalent to $(x^r,y)$, hence $C(f)$ is not isomorphic to $K_2$, a contradiction. Denote $R=P\cap P_1$, we can assume that $R=(0,0)$.

Consider the group $\Stab(f)\subset {\mathcal GA(2,2)}$. Every affine transformation which preserves $(P,R)$ is of the form $(ax,a^2y).$ Hence $\dim\Stab(f)\le 1$ and since $f$ is not generic we have $\dim\Stab(f)= 1$. Consequently $\dim O(f)=11$ and $O(f)$ is dense in $\{\overline{\Phi}_2=0\}\setminus \{\overline{\Phi}_1=0\}$.

(2) $\Rightarrow$ (3) It is obvious.

(3) $\Rightarrow$ (1) It is obvious.
\end{proof}

\begin{re}
{\rm It is easy to see that the discriminant of $f_3$ (and hence the discriminant of every mapping from $O(f_3)$) is a union of a parabola and a line which is tangent to this parabola.}
\end{re}

Let $f_4=(x^2,y^2)$. We will describe the set $(\{\overline{\Phi}_2=0\}\setminus \{\overline{\Phi}_1=0\})\setminus O(f_2)$.

\begin{theo}
The following conditions are equivalent:

\begin{enumerate}
\item $C(f)$ and $\Delta(f)$ are isomorphic to the cross $K_2=\{ xy=0\}$.
\item $O(f)=O(f_4)$, i.e. $f$ is linearly equivalent to $f_4$.
\item $O(f)=(\{\overline{\Phi}_2=0\}\setminus \{\overline{\Phi}_1=0\})\setminus O(f_3)$.
\end{enumerate}

\noindent The orbit $O(f_4)$ is a smooth variety of dimension $10$. Moreover,
$O(f_3)\cup O(f_4)=\{\overline{\Phi}_2=0\}\setminus \{\overline{\Phi}_1=0\}$.
\end{theo}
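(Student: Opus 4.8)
The plan is to establish the cycle $(2)\Rightarrow(1)\Rightarrow(2)$ together with the set identity $(\{\overline{\Phi}_2=0\}\setminus\{\overline{\Phi}_1=0\})\setminus O(f_3)=O(f_4)$, from which the equivalence with $(3)$ and all the final assertions follow formally. The implication $(2)\Rightarrow(1)$ is immediate: one computes $J(f_4)=4xy$, so $C(f_4)=\{xy=0\}$, and $\Delta(f_4)=f_4(C(f_4))=\{xy=0\}$, both isomorphic to $K_2$. Since $C(L\circ f\circ R)=R^{-1}(C(f))$ and $\Delta(L\circ f\circ R)=L(\Delta(f))$, and affine images of a cross are crosses, every member of $O(f_4)$ satisfies $(1)$.

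For $(1)\Rightarrow(2)$ I would normalize and compute. Using a source automorphism $R$ I may assume $C(f)=\{xy=0\}$, so that $J(f)=\lambda xy$ with $\lambda\neq0$. First, $f$ is proper with $\mu(f)=4$: exactly as in the proof for $f_3$, a common point at infinity of the level curves would have to be $(1:0)$ or $(0:1)$, forcing $a_1=a_2=0$ (resp. $c_1=c_2=0$) and hence $B=0$, contradicting $B=\lambda\neq0$. In particular no component of $C(f)$ is contracted, so the two branches $f(\{x=0\})$ and $f(\{y=0\})$ are irreducible curves. As $\Delta(f)=f(\{x=0\})\cup f(\{y=0\})$ is assumed isomorphic to $K_2$, each branch, being irreducible and contained in the union of the two lines of $\Delta(f)$, coincides with one of them; the two branches cannot be the same line (else $\Delta(f)$ would be a single line), so they are precisely the two components, and both pass through the image of the node $(0,0)$, which is therefore their common point. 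Now translate the target so that this point is the origin and apply $L\in GA(2,2)$ carrying the two lines to the coordinate axes, arranged so that $f(\{x=0\})\subset\{u=0\}$ and $f(\{y=0\})\subset\{v=0\}$. This forces $g(0,y)\equiv0$ and $h(x,0)\equiv0$, i.e. $c_1=e_1=f_1=0$ and $a_2=d_2=f_2=0$. Substituting into $J(f)=\lambda xy$ and comparing coefficients then pins down $b_1=b_2=d_1=e_2=0$ while $a_1,c_2\neq0$, so $f=(a_1x^2,c_2y^2)$; a final diagonal $L$ yields $f_4=(x^2,y^2)$.

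The set identity is obtained by combining the previous theorem with a mild strengthening of this normalization. By the criteria stated earlier, $\{\overline{\Phi}_2=0\}\setminus\{\overline{\Phi}_1=0\}$ is exactly the locus where $C(f)$ is a cross, and by the theorem on $f_3$ the subset of it on which $\Delta(f)$ contains a parabola is precisely $O(f_3)$. Hence for $f$ in $(\{\overline{\Phi}_2=0\}\setminus\{\overline{\Phi}_1=0\})\setminus O(f_3)$ the curve $C(f)$ is a cross and $\Delta(f)$ contains no parabola. Running the normalization above, each branch of $\Delta(f)$ is the image of a line under a quadratic map, hence a line or a parabola; having excluded parabolas, both are lines, and they are distinct, for otherwise $\Delta(f)$ would be a single smooth line $\cong\C$ and by \cite{jel} $f$ would be algebraically equivalent to $(x^r,y)$, whose critical set is not a cross. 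As both lines pass through the image of the node they meet there, so $\Delta(f)\cong K_2$ and $(1)$ holds; by $(1)\Rightarrow(2)$, $f\in O(f_4)$. Conversely $O(f_4)\subset\{\overline{\Phi}_2=0\}\setminus\{\overline{\Phi}_1=0\}$ because $C(f_4)$ is a cross, while $O(f_4)\cap O(f_3)=\emptyset$ since $\Delta(f_4)$ contains no parabola whereas $\Delta(f_3)$ does. This proves $(\{\overline{\Phi}_2=0\}\setminus\{\overline{\Phi}_1=0\})\setminus O(f_3)=O(f_4)$; consequently $(3)$ reads $O(f)=O(f_4)$, which is $(2)$, and rearranging gives $O(f_3)\cup O(f_4)=\{\overline{\Phi}_2=0\}\setminus\{\overline{\Phi}_1=0\}$.

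Finally, $O(f_4)$ is smooth because orbits of an algebraic group action are smooth locally closed subvarieties, and its dimension is $10$: the elements of $\Stab(f_4)$ are exactly the maps $R(x,y)=(ax,by)$ with $L=\mathrm{diag}(a^{-2},b^{-2})$, together with their composites with the swap $(x,y)\mapsto(y,x)$, so $\dim\Stab(f_4)=2$ and $\dim O(f_4)=12-2=10$. The one genuinely computational point, and the step I expect to require the most care, is the coefficient comparison in $(1)\Rightarrow(2)$ after the normalizations $g(0,y)\equiv0$, $h(x,0)\equiv0$: one must verify that $J(f)=\lambda xy$ leaves no room for the mixed terms and forces the diagonal quadratic form. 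Everything else is bookkeeping built on the two cited results and the earlier theorems.
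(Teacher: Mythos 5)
Your proposal is correct, and it diverges from the paper's proof in one substantive place: the implication $(1)\Rightarrow(2)$. The paper, after establishing properness exactly as you do (via the absence of common points at infinity, forcing $B\neq 0$), invokes the classification theorem of \cite{jel}: a proper mapping whose discriminant is a cross is \emph{algebraically} equivalent to $(x^r,y^s)$; the bound $\mu(f)\le 4$ then gives $r=s=2$, and membership in $\Omega(2,2)$ upgrades algebraic to linear equivalence. You replace this citation by a direct normalization: putting $C(f)=\{xy=0\}$, identifying the two branches $f(\{x=0\})$, $f(\{y=0\})$ with the two lines of $\Delta(f)$ through the image of the node, imposing $g(0,y)\equiv 0$, $h(x,0)\equiv 0$, and comparing coefficients in $J(f)=\lambda xy$. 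Your computation checks out: the $xy$-coefficient gives $a_1c_2\neq 0$, the $x^2$- and $y^2$-coefficients then kill $b_2$ and $b_1$, and the linear terms kill $e_2$ and $d_1$, leaving $f=(a_1x^2,c_2y^2)$. This is more elementary and self-contained than the paper's argument, and it sidesteps the (somewhat terse in the paper) passage from algebraic to linear equivalence; what it costs is generality, since the appeal to \cite{jel} is the pattern the paper reuses for several orbits. For the set identity $(\{\overline{\Phi}_2=0\}\setminus\{\overline{\Phi}_1=0\})\setminus O(f_3)=O(f_4)$ your argument coincides in substance with the paper's $(2)\Rightarrow(3)$: branches of $\Delta(f)$ are lines or parabolas, parabolas are excluded by the theorem on $f_3$, and the single-line case is excluded by \cite{jel} since $(x^r,y)$ has no cross as critical set -- here you still rely on the external reference, just as the paper does. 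Your stabilizer description and the resulting dimension count $\dim O(f_4)=12-2=10$ agree with the paper's.
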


\begin{proof}
(1) $\Rightarrow$ (2) We can assume that $C(f)=K_2$ and $\Delta(f)=K_2$. As in the proof above, the mapping $f$ is proper.
Hence by \cite{jel} the mapping $f$ is algebraically equivalent to $(x^r, y^s)$ for some $r,s>1$. Since $\mu(f)\le 4$ we have $r=s=2$. Finally $f$ is linearly equivalent to $(x^2,y^2)$ because $f\in \Omega(2,2)$.

(2) $\Rightarrow$ (3) If $f\in \{\overline{\Phi}_2=0\}\setminus \{\overline{\Phi}_1=0\}$ and $O(f)\not=O(f_3)$, then $\Delta(f)$ is isomorphic either to $K_2$ or to a line. The second possibility can be excluded. Indeed, assume that $\Delta(f)$ is a line. By \cite{jel} the mapping $f$ is algebraically equivalent to a mapping $(x^r,y)$ hence $C(f)$ is not isomorphic to $K_2$, a contradiction.

(3) $\Rightarrow$ (1) We have $f_4\in O(f)$.

\noindent Finally $\Stab(f_4)=\{ ((\alpha^{-2}x,\beta^{-2}y), (\alpha x,\beta y)), \alpha,\beta\in \C^*\}\cup \{ ((\beta^{-2}y,\alpha^{-2}x), (\alpha y,\beta x)), \alpha,\beta\in \C^*\}$,
hence $\dim\Stab(f_4)=2$ and consequently $\dim O(f_4)=10$.
 \end{proof}

Now we would like to obtain the equations of the closure of $O(f_4)$. Note that if $f=(g,h)=f_4\circ R$, where $R\in GA(2,2)$ then $g$ and $h$ are squares, i.e. the coefficients of $g$ and $h$ satisfy:

$$\rank\left[\begin{matrix} 2a_1 & b_1 & d_1\\ b_1 & 2c_1 & e_1 \\ d_1 & e_1 & 2f_1\end{matrix}\right]\leq 1\quad\text{and}\quad
\rank\left[\begin{matrix} 2a_2 & b_2 & d_2\\ b_2 & 2c_2 & e_2 \\ d_2 & e_2 & 2f_2\end{matrix}\right]\leq 1$$

Furthermore if we take $f=(g,h)=L\circ f_4\circ R$, where $(L,R)\in{\mathcal GA(2,2)}$ then $g-f_1$ and $h-f_2$ are linear combinations of two squares (both of the same two). Thus the coefficients of $f$ satisfy the condition $\rank \Psi_1\leq 2$, where:

$$\Psi_1=\left[\begin{matrix} 2a_1 & b_1 & 2a_2 & b_2\\ b_1 & 2c_1 & b_2 & 2c_2 \\ d_1 & e_1 & d_2 & e_2\end{matrix}\right].$$

Obviously $O(f_4)$ is a dense open subset of the set $\{\rank \Psi_1\leq 2\}$. In particular we obtain that $O(f_3)=\{\overline{\Phi}_2=0\}\setminus (\{\overline{\Phi}_1=0\}\cup \{\rank \Psi_1\leq 2\})$ and $O(f_4)=\{\rank \Psi_1\leq 2\}\setminus \{\overline{\Phi}_1=0\}$.

The remaining part  of orbits is contained in $ \{\overline{\Phi}_2=0\}\cap \{\overline{\Phi}_1=0\}$, in particular all of them have codimension
   greater than or equal to $2.$ The rest of the paper is devoted to description of these orbits. Note that if $f\in \{\overline{\Phi}_2=0\}\cap \{\overline{\Phi}_1=0\}$, then $C(f)$ is either two parallel lines or a double line or a line or it is the empty set or the whole $\C^2$.

Let $f_5=(x^2-x,xy)$. We will describe the set $\{\rank\Phi_1=1\}\cap\{\rank \Phi_2=2\}$.

\begin{theo}\label{th2lines}
The following conditions are equivalent:

\begin{enumerate}
\item $O(f)=\{\rank\Phi_1=1\}\cap\{\rank \Phi_2=2\}$.
\item $C(f)$ consists of two parallel lines.
\item $O(f)=O(f_5)$, i.e. $f$ is linearly equivalent to $f_5$.
\end{enumerate}

\noindent The orbit $O(f_5)$ is a smooth  variety of dimension $10$.
\end{theo}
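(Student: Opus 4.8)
My plan is to follow the same three–step cycle $(1)\Rightarrow(2)\Rightarrow(3)\Rightarrow(1)$ used in the previous theorems, concentrating all the work in $(2)\Rightarrow(3)$. Unlike the parabola and cross cases, $f_5$ is not proper, so I cannot invoke \cite{jel}; instead I will normalize $f$ by hand. Before starting I record the dictionary set up just above the statement: $\rank\Phi_1=1$ says that the degree–two part of $J(f)$ is a nonzero perfect square (so $\overline{C(f)}$ has a single point at infinity), and $\rank\Phi_2=2$ says that $\{J(f)=0\}$ is a degenerate conic of rank $2$, i.e.\ a pair of \emph{distinct} lines; together they hold exactly when $C(f)$ is a pair of distinct parallel lines, which is the content of $(1)\Leftrightarrow(2)$ at the level of membership. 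Since $C(L\circ f\circ R)=R^{-1}(C(f))$ and an affine image of two distinct parallel lines is again two distinct parallel lines, the set $\{\rank\Phi_1=1\}\cap\{\rank\Phi_2=2\}$ is $\mathcal{GA}(2,2)$–invariant, hence a union of orbits. This gives $(1)\Rightarrow(2)$ at once, and reduces $(3)\Rightarrow(1)$ to the assertion that this invariant set is the single orbit $O(f_5)$.

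For $(2)\Rightarrow(3)$ I first apply a right affine map so that the two lines are vertical; then $J(f)=g_xh_y-g_yh_x$ depends only on $x$, so in the notation preceding the statement $B=C=0$ while $A\neq0$. From $B=C=0$ the coefficient vectors $(a_1,a_2)$ and $(b_1,b_2)$ are each parallel to $(c_1,c_2)$, whereas $A\neq0$ forces them to be independent; hence $(c_1,c_2)=0$, i.e.\ neither $g$ nor $h$ contains $y^2$. The degree–two parts are then $x(a_1x+b_1y)$ and $x(a_2x+b_2y)$ with $a_1x+b_1y,\ a_2x+b_2y$ independent, so a left linear change turns them into $x^2$ and $xy$, and subtracting constants (a left translation) gives $g=x^2+d_1x+e_1y$ and $h=xy+d_2x+e_2y$. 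Recomputing $J$ shows that its coefficient of $y$ equals $-e_1$, so $e_1=0$; thus $g=x^2+d_1x$ and $J=(2x+d_1)(x+e_2)$, whose two roots $-d_1/2$ and $-e_2$ are distinct precisely because $\rank\Phi_2=2$.

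To finish the normalization I note that on the line $x=-e_2$ both $g$ and $h$ are constant, so this line is contracted (reflecting that $f_5$ is not proper). The key move is the right translation $x\mapsto x-e_2$: it sends this root to $0$ and, since $xy+e_2y=(x+e_2)y$, simultaneously clears the $e_2y$ term, leaving $h=xy+d_2x$ up to a constant. A right translation $y\mapsto y-d_2$ then clears $d_2x$, giving $h=xy$ and $g=x^2+d_1'x$ with $d_1'=d_1-2e_2\neq0$; finally the scaling $x\mapsto tx$ on the source together with the matching scalings $u\mapsto t^{-2}u$, $v\mapsto t^{-1}v$ on the target normalizes $d_1'$ to $-1$ (take $t=-d_1'$), bringing $f$ to $(x^2-x,xy)=f_5$. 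Hence every $f$ with $C(f)$ two parallel lines lies in $O(f_5)$.

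For the dimension I would compute $\Stab(f_5)$ directly and find it generated by the two one–parameter subgroups $\big(L_t,R_t\big)=\big((u,v)\mapsto(u,t^{-1}v),(x,y)\mapsto(x,ty)\big)$ and $\big(L_s,R_s\big)=\big((u,v)\mapsto(u,v-su),(x,y)\mapsto(x,y+s(x-1))\big)$; an elementary check that these exhaust $\Stab(f_5)$ yields $\dim\Stab(f_5)=2$, hence $\dim O(f_5)=12-2=10$, and $O(f_5)$ is smooth because any orbit of an algebraic group action is a smooth locally closed subvariety. Then $O(f_5)\subseteq\{\rank\Phi_1=1\}\cap\{\rank\Phi_2=2\}$ by invariance, while $(2)\Rightarrow(3)$ gives the reverse inclusion, so the set equals $O(f_5)$ and $(3)\Rightarrow(1)$ follows. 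The main obstacle is the bookkeeping in $(2)\Rightarrow(3)$: I must choose the normalizing moves (in particular the translation $x\mapsto x-e_2$) so that clearing the lower–order terms uses only translations and a scaling, and never reintroduces a $y^2$– or $x^2$–term that would spoil the already–fixed leading parts $x^2,xy$.
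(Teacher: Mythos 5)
Your proposal is correct, and for the heart of the matter --- the implication (2)$\Rightarrow$(3) --- it follows essentially the same strategy as the paper, namely explicit affine normalization, though organized differently. The paper first normalizes $J(f)=2x^2-x$ by a right composition, fixes $a_1=1$, $a_2=0$, $b_1=0$, $b_2=1$ by a left one, and then reads off the coefficient equations ($c_1=c_2=e_1=0$, $2e_2+d_1=-1$, $d_1e_2=0$), ending in a two-case split whose final reduction to $f_5$ is left implicit; you instead only make the critical lines vertical, kill $c_1,c_2$ by the linear-independence argument on the coefficient vectors, and then clear all lower-order terms explicitly via the translation $x\mapsto x-e_2$ (exploiting $h=(x+e_2)y+d_2x$), the translation $y\mapsto y-d_2$, and a bi-scaling --- this avoids the case split and fills in the step the paper glosses over. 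Where you genuinely diverge is the dimension statement: the paper gets $\dim O(f_5)=10$ for free from the fact that $\{\rank\Phi_1=1\}\cap\{\rank\Phi_2=2\}$ is open in $\{\overline{\Phi}_1=0\}\cap\{\overline{\Phi}_2=0\}$, an intersection of two distinct irreducible hypersurfaces in the $12$-dimensional space $\Omega(2,2)$, whereas you compute $\Stab(f_5)$ and use $\dim O(f_5)=12-\dim\Stab(f_5)$ --- the same method the paper applies to $O(f_4)$ and $O(f_6)$. Your two one-parameter subgroups are indeed in the stabilizer and do exhaust it, but to make the ``elementary check'' complete you should record the key observation: any $(L,R)\in\Stab(f_5)$ has $R$ preserving each critical line \emph{separately} (it cannot swap them, since $\{x=0\}$ is contracted by $f_5$ while $\{x=1/2\}$ is not), which forces $R=(x,\beta y+\gamma(x-1))$ with $\beta\in\C^*$, after which matching coefficients determines $L$ uniquely and yields exactly the group generated by your two subgroups. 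With that observation added, both routes are complete; yours is more self-contained (no appeal to the codimension of the $\overline{\Phi}_i$ loci) and has the bonus of exhibiting $\Stab(f_5)$ explicitly.
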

\begin{proof}
(1) $\Rightarrow$ (2) Obvious.

(2) $\Rightarrow$ (3) By composing with suitable $R$ we may assume that $J(f)=2x^2-x$, hence $2=A=2a_1b_2-2a_2b_1$. So by composing with $L$ we may additionally assume that $a_1=1$ and $a_2=0$, hence $b_2=1$ and we may assume that $b_1=0$. Furthermore $0=B=4c_2$, $0=C=-2c_1$, $-1=D=2e_2+d_1$, $0=E=-e_1$ and $0=F=d_1e_2$. Hence either $f=(x^2+f_1,xy+d_2x-\frac{1}{2}y+f_2)$ or $f=(x^2-x,xy+d_2x+f_2)$. In both cases $f$ is equivalent to $f_5$.

(3) $\Rightarrow$ (1) In virtue of the implication (2) $\Rightarrow$ (3) it is obvious.

Finally the set $\{\rank\Phi_1=1\}\cap\{\rank \Phi_2=2\}$ is an open subset of $\{\overline{\Phi}_1=0\}\cap \{\overline{\Phi}_2=0\}.$ Since the latter set has dimension $10$ we have dim $O(f_5)=10.$
\end{proof}

Let $f_6=(x^2,xy)$. We will describe the set $\{\rank\Phi_1=1\}\cap\{\rank \Phi_2= 1\}$.

\begin{theo}
The following conditions are equivalent:

\begin{enumerate}
\item $C(f)$ is a double line.
\item $O(f)=O(f_6)$, i.e. $f$ is linearly equivalent to $f_6$.
\item $O(f)=\{\rank\Phi_1=1\}\cap\{\rank \Phi_2= 1\}$.
\end{enumerate}

\noindent The orbit $O(f_6)$ is a smooth  variety of dimension $9$.
\end{theo}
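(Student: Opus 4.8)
The plan is to follow the template of Theorem~\ref{th2lines}: the entire content sits in the single normalization $(1)\Rightarrow(2)$, after which the orbit equality in $(3)$ and the dimension count fall out formally.

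I would first record the pointwise reduction supplied by the discussion preceding the theorem. Since $\Phi_1$ is a principal submatrix of $\Phi_2$, the condition $\rank\Phi_2=1$ forces $\rank\Phi_1\le1$; and if $\rank\Phi_1=0$ (so $A=B=C=0$) then a rank-one $\Phi_2$ forces $D=E=0$ and $F\neq0$, i.e. $J(f)$ is a nonzero constant and $C(f)=\emptyset$. Hence on the locus $S:=\{\rank\Phi_1=1\}\cap\{\rank\Phi_2=1\}$ the curve $C(f)$ is exactly a genuine double line of $\C^2$; that is, condition $(1)$ is literally the statement $f\in S$. Because $J$ transforms by a nonzero scalar and an affine substitution under the action, the property of being a double line is invariant, so $S$ is a union of orbits; moreover $f_6\in S$, giving $O(f_6)\subseteq S$. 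Thus the identity $S=O(f_6)$, which makes $(1)$, $(2)$, $(3)$ equivalent at once, amounts to showing that every $f\in S$ is linearly equivalent to $f_6$ — precisely $(1)\Rightarrow(2)$.

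For $(1)\Rightarrow(2)$ I would argue as in Theorem~\ref{th2lines}. Composing on the right with a suitable $R$, I may assume the double line is $\{x=0\}$ and $J(f)=2x^2$, so that $A=2$ and $B=C=D=E=F=0$. Since $A\neq0$, the vectors $(a_1,b_1)$ and $(a_2,b_2)$ are independent, so a linear left factor $L$ lets me take $a_1=1,b_1=0,a_2=0,b_2=1$; this only multiplies $J$ by $\det L$, so $J$ stays of the form $\mu x^2$ and the equations $B=C=D=E=F=0$ persist. Reading them off now gives $c_2=0$ (from $B$), $c_1=0$ (from $C$), $e_1=0$ (from $E$), $d_1=-2e_2$ (from $D$), and $d_1e_2=0$ (from $F$), the last two forcing $d_1=e_2=0$. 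Hence $g=x^2+f_1$ and $h=xy+d_2x+f_2$; removing the constants $f_1,f_2$ by a left translation and then applying the right translation $(x,y)\mapsto(x,y-d_2)$ sends $h$ to $xy$ while fixing $g=x^2$. This exhibits $f$ as linearly equivalent to $f_6=(x^2,xy)$ and completes the cycle.

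Finally I would compute $\Stab(f_6)$ to get the dimension. Imposing $L\circ f_6\circ R=f_6$ and using that $R$ must preserve $\{x=0\}$ with $J(f_6\circ R)\propto x^2$ forces $R(x,y)=(px,\,sx+ty)$ with $pt\neq0$; matching the two coordinates of $L\circ f_6\circ R$ to $(x^2,xy)$ then pins down $L$ uniquely in terms of $(p,s,t)$. Thus $\Stab(f_6)$ is the connected $3$-dimensional group with free parameters $p,t\in\C^*$ and $s\in\C$, whence $\dim O(f_6)=12-3=9$; smoothness is automatic as $O(f_6)$ is an orbit of an algebraic group. The only step requiring care is the bookkeeping in $(1)\Rightarrow(2)$ — in particular verifying that a left factor $L$ genuinely preserves the normal form $J=\mu x^2$, and that $D=0$ together with $F=d_1e_2=0$ really collapses the last parameters — but no real obstacle arises.
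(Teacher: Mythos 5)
Your proposal is correct and takes essentially the same route as the paper: the identical normalization for $(1)\Rightarrow(2)$ (right composition to get $J(f)=2x^2$, left composition to get $a_1=b_2=1$, $a_2=b_1=0$, then reading off $c_1=c_2=d_1=e_1=e_2=0$ and finishing with translations --- your bookkeeping is in fact the correct version of what the paper writes, since its ``$a_1=b_1=1$ and $a_2=b_2=0$'' is a typo), the same identification of $\{\rank\Phi_1=1\}\cap\{\rank\Phi_2=1\}$ with the double-line locus so that everything reduces to $(1)\Rightarrow(2)$, and the same explicit stabilizer yielding $\dim O(f_6)=12-3=9$. One minor imprecision: preserving $\{x=0\}$ together with $J(f_6\circ R)\propto x^2$ only forces $R=(px,\,sx+ty+u)$, not $u=0$; the translation term is killed by the coefficient matching you invoke for $L$ (the coefficient of $x$ in the second component of $L\circ f_6\circ R$ equals $u/t$ and must vanish), so your parametrization of $\Stab(f_6)$ and the dimension count are unaffected.
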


\begin{proof}
(1) $\Rightarrow$ (2) As in Theorem \ref{th2lines} we can assume that $J(f)=2x^2$, $a_1=b_1=1$ and $a_2=b_2=0$. Then it follows that $c_1=c_2=d_1=e_1=e_2=0$. Hence $f=(x^2+f_1,xy+d_2x+f_2)$ and is equivalent to $f_6$.

(2) $\Rightarrow$ (3) Note that for every $f\in\{\rank\Phi_1=1\}\cap\{\rank \Phi_2=1\}$ the set $C(f)$ is a double line. Hence by
the first part of the proof, we have  $\{\rank\Phi_1=1\}\cap\{\rank \Phi_2=1\}=O(f_6)$.

(3) $\Rightarrow$ (1) Obvious.

Finally note that $\Stab(f_6)=\{ (\alpha^{-2}x, (\alpha\gamma)^{-1}y-\beta\gamma^{-1}\alpha^{-2}x), (\alpha x, \beta x+ \gamma y), \alpha, \gamma \in \C^*, \beta\in \C\}.$  Hence we have dim $O(f_6)=9.$
\end{proof}

From the theorems above it follows that $\{\rank\Phi_1>0\}=O(f_1)\cup\ldots\cup O(f_6)$. We will now focus on the mappings satisfying $\rank\Phi_1(f)=0$. Let

$$\Phi_3(f)=\left[\begin{matrix} a_1 & b_1 & c_1 \\ a_2 & b_2 & c_2 \end{matrix}\right]\quad\text{and}\quad
\Phi_4(f)=\left[\begin{matrix} a_1 & b_1 & c_1 & d_1 & e_1\\ a_2 & b_2 & c_2 & d_2 & e_2\end{matrix}\right].$$

The assumption $\rank\Phi_1(f)=0$ is equivalent to $\rank\Phi_3(f)<2$ and to $C(f)$ not being a quadric. In particular $O(f)\cap\Omega(2,1)\neq\emptyset$, so from now on we will assume that $f\in\Omega(2,1)$, i.e. $a_2=b_2=c_2=0$.

Let $f=(g_2+g_1+g_0,h_1+h_0)$, where $g_i$ and $h_i$ are homogeneous of degree $i$. We may assume that $\deg g\geq\deg h$ then $\rank\Phi_3(f)=0$ if and only if $g_2=0$, $\rank\Phi_4(f)<2$ if and only if $h_1=0$ and $\rank\Phi_4(f)=0$ if and only if $f$ is constant.

Now we will describe the case when $g_2h_1\neq 0$. Let $f_7=(xy,x+y)$, $f_8=(x,xy)$, $f_9=(x^2,y)$, $f_{10}=(x^2+y,x)$ and $f_{11}=(x^2,x)$.

\begin{theo}
Let $f=(g_2+g_1+g_0,h_1+h_0)$ and $g_2h_1\neq 0$, then one of the following holds:

\begin{enumerate}
\item $g_2$ is not a square, $h_1$ does not divide $g_2$. Then $f\in O(f_7)$. Moreover $C(f)$ is a line, $\Delta(f)$ is a parabola and $\dim\Stab(f)=2$. The mapping $f$ is proper.
\item $g_2$ is not a square, $h_1$ divides $g_2$. Then $f\in O(f_8)$. Moreover $C(f)$ is a line and $\Delta(f)$ is a line and $\dim\Stab(f)=3$. The mapping $f$ is not proper.
\item $g_2$ is a square, $h_1$ does not divide $g_2$. Then $f\in O(f_9)$. Moreover $C(f)$ and $\Delta(f)$ are lines and $\dim\Stab(f)=3$. The mapping $f$ is proper.
\item $g_2$ is a square, $h_1$ divides $g_2$ but not $g_1$. Then $f\in O(f_{10})$. Moreover $f$ is an automorphism and $\dim\Stab(f)=4$. The mapping $f$ is proper.
\item $g_2$ is a square, $h_1$ divides $g_2$ and $g_1$. Then $f\in O(f_{11})$. Moreover $C(f)=\C^2$, $\im(f)$ is a parabola and $\dim\Stab(f)=5$. The mapping $f$ is not dominant.
\end{enumerate}
\end{theo}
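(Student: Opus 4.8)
The plan is to bring the given $f$ into the corresponding model $f_i$ by composing on both sides with elements of ${\mathcal GA}(2,2)$, and then to read off $C(f_i)$, $\Delta(f_i)$, properness and $\dim\Stab(f_i)$ from the model. Throughout I work with representatives in the stratum $g_2\neq0$, $h\in\Omega(\cdot,1)$, $h_1\neq0$. Source-linear changes preserve this stratum, as do the two target scalings, the target shear $g\mapsto g-\lambda h$ (legitimate because $\deg h<\deg g$, so $g_2$ is untouched), and the target swap of coordinates.

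First I would normalize the leading data $(g_2,h_1)$. Since $h_1\neq0$, pick source-linear coordinates adapted both to $h_1$ and to the factorization of $g_2$, and use the two independent target scalings to fix the remaining scalars. This reduces $(g_2,h_1)$ to exactly one of $(xy,x+y)$, $(xy,x)$, $(x^2,y)$, $(x^2,x)$, according to whether $g_2$ is a square and whether $h_1$ divides $g_2$; the last pair covers cases (4) and (5) simultaneously. These alternatives are mutually exclusive and exhaust all $f$ with $g_2h_1\neq0$, which already yields the case division of the statement, cases (4) and (5) being separated below by the position of $g_1$.

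Next I would clear the lower-order terms $g_1,g_0,h_0$. In cases (1)--(3) a source translation removes the linear part of $g$ (in case (3) after first using $g\mapsto g-\beta h$ to delete the $y$-term and then completing the square in $x$), and target translations kill the constants; in case (2) the target swap turns the normalized $(xy,x)$ into $f_8=(x,xy)$. This produces $f_7$, $f_8$, $f_9$. The one delicate point is the split between (4) and (5). Writing $x=h_1$, we have $g=cx^2+g_1+g_0$ with $c\neq0$ and $h=x+h_0$. If $h_1$ does not divide $g_1$, then $g_1=ax+by$ with $b\neq0$; after rescaling so that $b=c=1$, the source affine map $(x,y)\mapsto(x,\,y-ax-\gamma)$ absorbs the transverse direction and creates the term $+y$, giving $f_{10}=(x^2+y,x)$, an automorphism. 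If instead $h_1$ divides $g_1$, then $g$ is a polynomial in $x=h_1$ alone, no transverse target coordinate is available, and completing the square in $x$ yields $f_{11}=(x^2,x)$, which is not dominant.

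Finally I would verify the geometric data directly on each model: the Jacobian gives $C(f_i)$, and its image (or the fibre-jump locus of Definition \ref{defi} when $f_i$ is not proper) gives $\Delta(f_i)$; for instance $J(f_{11})\equiv0$ and $\im f_{11}$ is the parabola $\{x=y^2\}$. Properness is invariant under the ${\mathcal GA}(2,2)$-action, so it suffices to test the model: $f_7,f_9$ are finite double covers and hence proper, $f_{10}$ is an automorphism, $f_8$ is not proper since the fibre of $(x,xy)$ over $(0,c)$ with $c\neq0$ is empty, and $f_{11}$ is not dominant. The stabilizer dimensions $2,3,3,4,5$ (equivalently $\dim O(f_i)=10,9,9,8,7$) follow by describing $\Stab(f_i)$ explicitly. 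I expect the main obstacle to be precisely the (4)/(5) dichotomy: once $g_2$ is a square divisible by $h_1$, all quadratic behaviour of $f$ is concentrated along the single direction $h_1$, and it is the position of $g_1$ relative to $h_1$ alone that decides whether a transverse coordinate survives (the automorphism $f_{10}$) or collapses (the non-dominant $f_{11}$). The remaining affine reductions and stabilizer counts are routine; the only care needed is that each transformation used genuinely lies in ${\mathcal GA}(2,2)$ and preserves the normal form already achieved.
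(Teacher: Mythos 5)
Your proposal is correct and follows essentially the same route as the paper: normalize the leading pair $(g_2,h_1)$ to $(xy,x+y)$, $(xy,x)$, $(x^2,y)$ or $(x^2,x)$ by source/target affine maps, kill the lower-order terms by translations and shears (with the $(4)$/$(5)$ split decided by whether $h_1$ divides $g_1$), and then read off $C$, $\Delta$, properness and the stabilizer from the explicit models. The only cosmetic difference is in case (1), where the paper bounds $\dim\Stab(f_7)\le 2$ via the affine symmetries of the parabola $\Delta(f_7)$ and gets the reverse inequality from the codimension of the orbit, rather than writing $\Stab(f_7)$ out explicitly; both computations are routine and valid.
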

\begin{proof}
(1) Since $g_2$ is not a square it is a product of two independent linear
forms and we may assume, by composing with $R$, that $g_2=xy$. Moreover
since $h_1$ does not divide $g_2$ we have $h_1=d_2x+e_2y$ for
$d_2,e_2\in\C^*$. Composing with $R(x,y)=(x/d_2,y/e_2)$ and
$L(x,y)=(d_2e_2x,y)$ we obtain $h_1=x+y$ and retain $g_2=xy$. Thus
$f=(xy+d_1x+e_1y+f_1,x+y+f_2)$ and for $R(x,y)=(x-e_1,y-d_1)$ and
$L(x,y)=(x+e_1d_1-f_1,y+d_1+e_1-f_2)$ we obtain $f_7=L\circ f\circ R$.
Clearly $C(f_7)=\{y=x\}$ is a line and $\Delta(f_7)=\{4x=y^2\}$ is a parabola. It
remains to calculate $\dim\Stab(f_7)$.  Since for every $(R,L)\in\Stab(f_7)$ the mapping $R$ must preserve the parabola $\Delta(f)$ we see that $\dim\Stab(f_7)\le 2$. On the other hand the codimension of $O(f_7)$ is a least two. Consequently $\dim\Stab(f_7) = 2$.

(2) Similarly as in (1) we may assume that $g_2=xy$ and $h_1=x$. It follows that $f$ is equivalent with $f_8$. Moreover $J(f_8)=x$ and $\Delta(f_8)=\{ x=0\}$. Let $(L,R)\in\Stab(f_8)$ and $R=(R_1,R_2)$. Since $R$ preserves $C(f_8)$ we have $R_1=\alpha x$  and $R_2=\beta y+\gamma$, where $\alpha,\beta\in\C^*$ and $\gamma\in\C$.
Thus $f_8\circ R (x,y)=(\alpha\beta xy+\alpha\gamma,\alpha x)$ and for
$L(x,y)=((x-\alpha\gamma)/\alpha\beta,y/\alpha)$ we obtain $L\circ f_8\circ
R=f_8$.

(3) We may assume that $g_2=x^2$ and $h_1=y$. It follows that $f$ is equivalent with $f_9$. Moreover $J(f_9)=2x$ and $\Delta(f_9)=\{x=0\}$. As above, for $(L,R)\in\Stab(f_9)$ and $R=(R_1,R_2)$ we obtain $R_1=\alpha x$  and $R_2=\beta y+\gamma$, where $\alpha,\beta\in\C^*$ and $\gamma\in\C$.

(4) We may assume that $g_2=x^2$, $g_1=y$ and $h_1=x$. It follows that $f$ is equivalent with $f_{10}$. Let $(L,R)\in\Stab(f_{10})$ and $R=(R_1,R_2)$. Since $R_1^2$ is the only term in $f_{10}\circ R$ with quadratic part we have $R_1=\alpha x+\beta$, where $\alpha\in\C^*$ and $\beta\in\C$. Moreover the coefficients at $x^2$ and $y$ in the first component of $f_{10}\circ R$ must be equal, thus $R_2=\gamma x+\alpha^2 y+\delta$, where $\gamma,\delta\in\C$.

(5) Similarly as in (1) we may assume that $g_2=x^2$, $g_1=x$ and $h_1=x$. It follows that $f$ is equivalent with $f_{11}$. Let $(L,R)\in\Stab(f_{11})$ and $R=(R_1,R_2)$. As in (4) we have $R_1=\alpha x+\beta$, where $\alpha\in\C^*$ and $\beta\in\C$, however $R_2$ is arbitrary.
\end{proof}

Note that the conditions ``$g_2$ is a square'', ``$h_1$ divides $g_2$'' and ``$h_1$ divides $g_1$'' can be expressed in terms of the coefficients of $f\in\Omega(2,2)$. We will do it in the affine chart $a_1\neq 0$ -- one of the six natural charts covering the set $\{g_2\neq 0\}$.

Obviously $g_2$ is a square if and only if $4a_1c_1-b_1^2=0$. Moreover $h_1=d_2x+e_2y-\frac{a_2}{a_1}(d_1x+e_1y)$, hence it divides $g_1$ if and only if $a_1(a_1e_2-a_2e_1)^2-b_1(a_1e_2-a_2e_1)(a_1d_2-a_2d_1)+c_1(a_1d_2-a_2d_1)^2=0$. Finally, $h_1$ divides $g_1$ if and only if $d_1(a_1e_2-a_2e_1)-e_1(a_1d_2-a_2d_1)=0$.

Notice that from $g_2=0$ and $h_1\neq 0$ follows that $f\in GA(2,2)$. In this case $f\in O(f_{12})$, where $f_{12}=(x,y)$, and $\dim O(f)=6$.

The last case to describe is $h_1=0$. Let $f_{13}=(xy,0)$, $f_{14}=(x^2+y,0)$, $f_{15}=(x^2,0)$, $f_{16}=(x,0)$ and $f_{17}=(0,0)$.

\begin{theo}
Let $f=(g_2+g_1+g_0,0)$, then $C(f)=\C^2$ and one of the following holds:

\begin{enumerate}
\item $g_2$ is not a square. Then $f\in O(f_{13})$ and $\dim\Stab(f)=4$.
\item $g_2$ is a square, $g_1$ is nonzero and does not divide $g_2$. Then $f\in O(f_{14})$ and $\dim\Stab(f)=5$.
\item $g_2$ is a square, $g_1$ is zero or divides $g_2$. Then $f\in O(f_{15})$ and $\dim\Stab(f)=6$.
\item $g_2=0$ and $g_1\neq 0$. Then $f\in O(f_{16})$ and $\dim\Stab(f)=7$.
\item $g=0$. Then $f\in O(f_{17})$ and $\dim\Stab(f)=10$.
\end{enumerate}
\end{theo}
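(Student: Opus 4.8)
The plan is to dispose of the common conclusion first: for any $f=(g,0)$ the second row of the Jacobian matrix vanishes identically, so $J(f)\equiv 0$ and $C(f)=\C^2$. All the real content is the identification of the orbit and the computation of $\dim\Stab(f)$ in each of the five cases, and the key is a reduction that turns this into the classical affine classification of a single degree $\le 2$ polynomial.

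The reduction I would make is the observation that, for maps with vanishing second coordinate, the left action collapses to an affine action on the one surviving coordinate. Writing $L(u,v)=(\alpha u+\beta v+\gamma,\alpha'u+\beta'v+\gamma')$, one computes $L\circ(g\circ R,0)=(\alpha(g\circ R)+\gamma,\ \alpha'(g\circ R)+\gamma')$. Thus $\beta,\beta'$ multiply the zero coordinate and are irrelevant, while for the result to again lie in the form $(\,\cdot\,,0)$ (as every representative $f_{13},\dots,f_{17}$ does) one needs $\alpha'=\gamma'=0$ as soon as $g$ is non-constant. Conversely any admissible $(R,\alpha,\gamma)$ is realized by taking $L(u,v)=(\alpha u+\gamma,v)$. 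Hence $(g,0)$ and $(g',0)$ are linearly equivalent if and only if $g'=\alpha(g\circ R)+\gamma$ for some $R\in GA(2,2)$, $\alpha\in\C^*$, $\gamma\in\C$ (all constant $g$ collapsing to $0$). The whole problem is now the affine classification of $g$ up to change of source variables and affine change of value.

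For the normal forms I would run the standard trichotomy of the binary quadratic form $g_2$. If $g_2$ is not a square it is a product of two independent linear forms, so after an $R$ we may take $g_2=xy$; the translation $x\mapsto x+e_1,\ y\mapsto y+d_1$ kills $g_1$ and the value-shift kills $g_0$, giving $g\sim xy$ and $f\in O(f_{13})$. If $g_2$ is a nonzero square we normalize $g_2=x^2$ and complete the square to remove the $x$-part of $g_1$; a surviving $y$-term (equivalently $g_1$ nonzero and not dividing $g_2$) lets us set $y'=g_1$ and absorb the constant, so $g\sim x^2+y$ and $f\in O(f_{14})$, whereas if $g_1$ is zero or a multiple of $x$ no $y$-term remains and $g\sim x^2$, so $f\in O(f_{15})$. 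If $g_2=0$ and $g_1\neq 0$ then $g$ is a nonconstant affine form, equivalent to $x$, so $f\in O(f_{16})$; and if $g=0$ (or any constant, sent to $0$ by the value-shift) then $f=f_{17}$. This also shows the five cases are mutually exclusive and exhaustive, corresponding to $g_2$ of rank $2$, rank $1$, and rank $0$.

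Finally I would compute the stabilizers. For non-constant $g$ the free parameters $\beta\in\C,\ \beta'\in\C^*$ in $L$ contribute an independent $2$-dimensional factor, so $\dim\Stab(f)=2+\dim\Stab_{\mathrm{red}}(g)$ with $\Stab_{\mathrm{red}}(g)=\{(R,\alpha,\gamma):\alpha(g\circ R)+\gamma=g\}$. A direct count in each normal form gives $\dim\Stab_{\mathrm{red}}=2,3,4,5$ for $xy,\ x^2+y,\ x^2,\ x$ respectively; for instance for $xy$ only the scalings $R=(\lambda x,\mu y)$ and their $x\leftrightarrow y$ swap survive, with $\alpha=(\lambda\mu)^{-1}$ and $\gamma=0$ forced. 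This yields $\dim\Stab(f)=4,5,6,7$ as claimed. The constant case $f_{17}=(0,0)$ must be treated apart, since there the reduction fails: $L$ is forced to fix the origin while $R$ is unconstrained, so $\Stab(f_{17})=GL(2,\C)\times GA(2,2)$ has dimension $4+6=10$. The main obstacle is purely bookkeeping: verifying that $\beta,\beta'$ are genuinely free and that $\alpha',\gamma'$ are forced to vanish, isolating the degenerate constant case from the main reduction, and carrying out the several elementary stabilizer counts without error. As a consistency check one may instead read off $\dim O(f)$ from the normal forms and invoke $\dim\Stab(f)+\dim O(f)=12$.
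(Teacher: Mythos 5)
Your proposal is correct and follows essentially the same route as the paper: your reduction of the left action to the affine action $g\mapsto\alpha(g\circ R)+\gamma$ on the single surviving coordinate, with the free $2$-dimensional factor $(\beta,\beta')$ of $L$, is exactly the paper's observation that the image of $f$ is a line whose pointwise stabilizer in $GA(2,2)$ is $2$-dimensional (a point with $4$-dimensional stabilizer in case (5)), and your normal forms and stabilizer counts coincide with those in the paper. The only cosmetic difference is that you spell out the constant case and the exclusivity of the five cases explicitly, where the paper declares cases (4) and (5) obvious.
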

\begin{proof}
Note that in cases (1)--(4) the image of $f$ is a line. Thus if $(L,R)\in\Stab(f)$ then $R$ determines $L$ up to an automorphism preserving every point of the line, the group of such automorphisms is two dimensional. In case (5) the image of $f$ is a point, which is preserved by a four dimensional group.

(1) Since $g_2$ is not a square we may assume, by composing with $R$, that $g_2=xy$. Thus $f=(xy+d_1x+e_1y+f_1,0)$, which is equivalent with $f_{13}$. Let $(L,R)\in\Stab(f_{13})$ and $R=(R_1,R_2)$. We have $R_1R_2=\alpha_1xy+\alpha_2$, consequently $R$ is equal either to $(\beta_1 x,\beta_2 y)$ or to $(\beta_1 y,\beta_2 x)$, where $\beta_1,\beta_2\in\C^*$.

(2) We may assume that $g_2=x^2$ and $g_1=y$, so $f$ is equivalent with $f_{14}$. Let $(L,R)\in\Stab(f_{14})$ and $R=(R_1,R_2)$. The quadratic part of $R_1^2$ is equal $\alpha_1^2x^2$, thus $R=(\alpha_1 x+ \alpha_2, \alpha_1^2 y-2\alpha_1\alpha_2 x+\alpha_3)$, where $\alpha_1\in\C^*$ and $\alpha_2,\alpha_3\in\C$.

(3) We may assume that $f=(x^2+d_1x+f_1,0)$, which is equivalent with $f_{15}$. Let $(L,R)\in\Stab(f_{15})$ and $R=(R_1,R_2)$. We have $R_1=\alpha_x$ and $R_2$ arbitrary.

(4) and (5) are obvious.

\end{proof}

\section{The real case}

In this section we will determine the orbits in $\Omega(2,2)$ over the field of real numbers. We will base on the complex case and to avoid confusion we will use the subscripts $\mathbb{R}$ and $\mathbb{C}$ to indicate over which field an object is considered. Obviously for $f\in\Omega_{\mathbb{R}}(2,2)$ we have $O_{\mathbb{R}}(f)\subset O_{\mathbb{C}}(f)\cap\Omega_{\mathbb{R}}(2,2)$. In most cases equality holds, however there are two cases when the restriction of a complex orbit splits into two real orbits. We have the following possibilities:

(1) the restriction of the generic complex orbit splits into two semialgebraic orbits given by $\overline{\Phi}_1(f)<0$ and $\overline{\Phi}_1(f)>0$. The first case is represented by $f_1=\left(x^2+y,y^2+x\right)$ with $C(f_1)=\{4xy-1=0\}$ a hyperbola and $\Delta(f_1)=\left\{2^8x^2y^2-2^8x^3-2^8y^3+2^5\cdot 9xy-27=0\right\}$ a reduced and irreducible curve with a cusp at $f_1\left(\frac{1}{2},\frac{1}{2}\right)$. The second case is represented by $f_{1'}=\left(x^2-y^2+x,2xy-y\right)$ with $C(f_{1'})=\left\{4x^2+4y^2-1=0\right\}$ a circle and $\Delta(f_{1'})=\{2^{16}(x^2+y^2)^2+2^{17}(-x^3+3xy^2)+2^9\cdot 3^3\cdot 5(x^2+y^2)-(15)^3\}$ a reduced and irreducible curve with $3$ cusps at points $f_{1'}\left(\frac{1}{4},\frac{-\sqrt{3}}{4}\right)$, $f_{1'}\left(\frac{1}{4},\frac{\sqrt{3}}{4}\right)$ and $f_{1'}\left(\frac{1}{2},0\right)$.

(2) $O_{\mathbb{R}}(f_2)=O_{\mathbb{C}}(f_2)\cap\Omega_{\mathbb{R}}(2,2)$.

(3)  $O_{\mathbb{R}}(f_3)=O_{\mathbb{C}}(f_3)\cap\Omega_{\mathbb{R}}(2,2)$.

(4) the restriction of complex orbit of $f_4=\left(x^2,y^2\right)$ splits into two orbits given by $\overline{\Phi}_1(f)<0$ and $\overline{\Phi}_1(f)>0$. The first case is represented by $f_4$ with $C(f_4)=\{4xy=0\}$ two intersecting lines and $\Delta(f_4)=\{xy=0\}$ also two intersecting lines. The second case is represented by $f_{4'}=\left(x^2-y^2,xy\right)$ with $C(f_{1'})=\left\{2x^2+2y^2=0\right\}$ a point.

(5) for $f\in\{f_5,\ldots,f_{17}\}$ we have $O_{\mathbb{R}}(f)=O_{\mathbb{C}}(f)\cap\Omega_{\mathbb{R}}(2,2)$.

\begin{proof}
(1)
Let $f\in O_{\mathbb{C}}(f_1)\cap\Omega_{\mathbb{R}}(2,2)$. Note that $f$ has $3$ cusps. Since the cusps are given by real equations they are either real or in complex conjugate pairs. So $f$ may have either $3$ or $1$ real cusp. Note that $f_{1'}=\left(x^2-y^2+x,2xy-y\right)$ has $3$ real cusps and $f_1$ has $1$.

Assume that $f$ has $3$ real cusps. There are some $L,R\in GA_{\mathbb{C}}(2,2)$ such that $f_{1'}=L\circ f\circ R$. Note that $R$ maps the three real and non-collinear cusps of $f_{1'}$ into the three real cusps of $f$. Hence $R(\mathbb{R}^2)=\mathbb{R}^2$, so $R\in GA_{\mathbb{R}}(2,2)$. Furthermore $f\circ R(\mathbb{R}^2)=\mathbb{R}^2$, hence also $L\in GA_{\mathbb{R}}(2,2)$. Thus $f\in O_{\mathbb{R}}(f_{1'})$.

Now assume that $f$ has $1$ real cusp. There are some $L,R\in GA_{\mathbb{C}}(2,2)$ such that $f_1=L\circ f\circ R$. By composing with an element of $\Stab(f_1)$ (see the proof of Theorem \ref{th1}) we may assume that $R$ maps the real cusp of $f_1$ to the real cusp of $f$. Since the complex cusps are conjugate the mapping $\overline{R}$, i.e. the conjugate of $R$, coincides with $R$ on the three non-collinear cusps. Hence $\overline{R}=R$, so $R\in GA_{\mathbb{R}}(2,2)$ and consequently $L\in GA_{\mathbb{R}}(2,2)$ and $f\in O_{\mathbb{R}}(f_1)$.

Now we will show that if $f'\in O_{\mathbb{R}}(f)$ then $\sgn(\overline{\Phi}_1(f'))=\sgn(\overline{\Phi}_1(f))$. Indeed, let $L,R\in GA_{\mathbb{R}}(2,2)$, we have $J(L\circ f\circ R)=(J(L)\circ f\circ R)\cdot(J(f)\circ R)\cdot J(R)=J(L)\cdot J(R)\cdot (J(f)\circ R)$. Note that $\Phi_1(f)$ is the matrix defining the quadratic form which is the homogeneous part of $2J(f)$ of degree $2$. Consequently $\Phi_1(L\circ f\circ R)=J(L)\cdot J(R)\cdot D(R)^T\cdot \Phi_1(f)\cdot D(R)$, where $D(R)$ is the derivative of the affine map $R$. Hence $\overline{\Phi}_1(L\circ f\circ R)=J(L)^2\cdot J(R)^2\cdot\det(D(R)^T)\cdot\det(\Phi_1(f))\cdot\det(D(R))=J(L)^2\cdot J(R)^4\cdot\overline{\Phi}_1(f)$. So in particular $\sgn(\overline{\Phi}_1(L\circ f\circ R))=\sgn(\overline{\Phi}_1(f))$.

As a consequence we obtain that $f\in O_{\mathbb{R}}(f_{1'})$ if and only if $\overline{\Phi}_1(f)>0$ and $f\in O_{\mathbb{R}}(f_1)$ if and only if $\overline{\Phi}_1(f)<0$. Indeed if $f\in O_{\mathbb{C}}(f_1)\cap\Omega_{\mathbb{R}}(2,2)$ then either $f\in O_{\mathbb{R}}(f_{1})$ and $\sgn(\overline{\Phi}_1(f))=\sgn(\overline{\Phi}_1(f_{1}))=-1$ or $f\in O_{\mathbb{R}}(f_{1'})$ and $\sgn(\overline{\Phi}_1(f))=\sgn(\overline{\Phi}_1(f_{1'}))=1$.

(2)
Let $f\in O_{\mathbb{C}}(f_2)\cap\Omega_{\mathbb{R}}(2,2)$. Note that $C_\mathbb{C}(f)$ is a parabola, hence $C_\mathbb{R}(f)$ is also a parabola. Moreover $f$ has one cusp over $\mathbb{C}$ and since it is given by real equations the cusp must be real. Let $R_0\in GA_{\mathbb{R}}(2,2)$ be such that $C_\mathbb{R}(f\circ R_0)=\{2x^2=y\}_\mathbb{R}=C_\mathbb{R}(f_2)$ and $f\circ R_0$ has the cusp at origin. Moreover let $L,R\in GA_{\mathbb{C}}(2,2)$ be such that $f\circ R_0=L\circ f_2\circ R$. Note $R$ is an automorphism of $C_\mathbb{C}(f\circ R_0)=\{2x^2=y\}_\mathbb{C}=C_\mathbb{C}(f_2)$ and preserves the origin. Hence $R=(ax,a^2y)$ for some $a\in\mathbb{C}^*$. Let $L_1=(a^2x,a^3y)$, note that we have $f_2\circ R=L_1\circ f_2$ and thus $f=L\circ L_1\circ f_2\circ R_0^{-1}$. Since the mapping $L\circ L_1$ maps $\mathbb{R}^2$ onto itself, we have $L\circ L_1\in GA_{\mathbb{R}}(2,2)$ and finally $f\in O_{\mathbb{R}}(f_2)$.

(3)
Let $f\in O_{\mathbb{C}}(f_3)\cap\Omega_{\mathbb{R}}(2,2)$. Note that $C_\mathbb{C}(f)$ consists of two distinct intersecting lines, say $\{\alpha(x,y)=0\}$ and $\{\beta(x,y)=0\}$. Since $J(f)=\alpha\beta$ has real coefficients we may assume that $\alpha$ and $\beta$ either have real  or complex conjugate coefficients. However, the latter case can be excluded. Indeed, assume that $\alpha$ and $\beta$ have complex conjugate coefficients. Since $f$ has real coefficients $f(\overline{x},\overline{y})=\overline{f(x,y)}$ and it follows that the components of the discriminant $\Delta(f)$ are also conjugate. This is a contradiction since the discriminant of mappings equivalent to $f_3$ consists of a line and a parabola.

So $C_\mathbb{R}(f)$ is a cross and $\Delta_\mathbb{R}(f)$ is a sum of a parabola and a tangent line. Let $L_0,R_0\in GA_{\mathbb{R}}(2,2)$ be such that $C_\mathbb{R}(L_0\circ f\circ R_0)=\{xy=0\}_\mathbb{R}=C_\mathbb{R}(f_3)$ and $\Delta_\mathbb{R}(L_0\circ f\circ R_0)=\{y(y-x^2)=0\}_\mathbb{R}=\Delta_\mathbb{R}(f_3)$. Moreover let $L,R\in GA_{\mathbb{C}}(2,2)$ be such that $L_0\circ f\circ R_0=L\circ f_3\circ R$. Note that $R$ preserves the set $\{xy=0\}_\mathbb{C}$ and $L$ preserves the set $\{y(y-x^2)=0\}_\mathbb{C}$, hence $L=(ax,a^2y)$ and $R=(bx,cy)$ for some $a,b,c\in\mathbb{C}^*$. Since $L\circ f_3\circ R=(ab^2x^2+acy,a^2c^2y^2)$ is a real mapping we have $ac,ab^2\in\mathbb{R}$. Moreover $f_3=(b^2x,b^4y)\circ f_3\circ(b^{-1}x,b^{-2}y)$, so $L_0\circ f\circ R_0=(ab^2x,a^2b^4y)\circ f_3\circ(x,cb^{-2}y)$ and $f\in O_{\mathbb{R}}(f_3)$.

Note that $f\in O_{\mathbb{R}}(f_3)$ if and only if $\sgn(\overline{\Phi}_1(f))=\sgn(\overline{\Phi}_1(f_3))=-1$. Thus we have shown that for $f\in\Omega_{\mathbb{R}}(2,2)$ the conditions $\overline{\Phi}_1(f)\neq 0$, $\overline{\Phi}_2(f)=0$ and $\rank\Psi_1(f)=3$ imply $\overline{\Phi}_1(f)<0$.

(4)
Let $f\in O_{\mathbb{C}}(f_4)\cap\Omega_{\mathbb{R}}(2,2)$. Similarly as in (3) the set $C_\mathbb{C}(f)$ consists of two lines $\{\alpha(x,y)=0\}$ and $\{\beta(x,y)=0\}$ and we may assume that $\alpha$ and $\beta$ either have real or complex conjugate coefficients.

If $C_\mathbb{R}(f)$ is a cross then we proceed similarly as in (3). We have $R_0\in GA_{\mathbb{R}}(2,2)$ and $L,R\in GA_{\mathbb{C}}(2,2)$ such that $C_\mathbb{R}(f\circ R_0)=\{xy=0\}_\mathbb{R}$. Since $R$ preserves the set $\{xy=0\}_\mathbb{C}$ and $f_4=(a^{-2}x,b^{-2}y)\circ f_4\circ (ax,by)$ we may assume that $R=(x,y)$ and deduce that $f\in O_{\mathbb{R}}(f_4)$.

Now let $\alpha$ and $\beta$ have complex conjugate coefficients, we will show that $f\in O_{\mathbb{R}}(f_{4'})$, where $f_{4'}=(x^2-y^2,xy)$.
Indeed, let $R_0\in GA_{\mathbb{R}}(2,2)$ be such that $R_0(0,0)$ is the point of intersection of $\{\alpha(x,y)=0\}$ and $\{\beta(x,y)=0\}$ and $R_0(\{y-ix=0\})=\{\alpha(x,y)=0\}$. It follows that $R_0(\{y+ix=0\})=\{\beta(x,y)=0\}$ and consequently $C_\mathbb{C}(f\circ R_0)=\{x^2+y^2=0\}_\mathbb{C}=C_\mathbb{C}(f_{4'})$. Let $L,R\in GA_{\mathbb{C}}(2,2)$ be such that $f\circ R_0=L\circ f_{4'}\circ R$. Note that $R$ preserves $\{x^2+y^2=0\}_\mathbb{C}$ and we may assume that it does not switch the lines $\{y-ix=0\}$ and $\{y-ix=0\}$. So $R=(ax+by,-bx+ay)$ for some $a,b\in\mathbb{C}$ such that $a^2+b^2\neq 0$. Let $L_1=((a^2-b^2)x+4aby,-abx-(a^2-b^2)y)$, note that $L_1$ is an automorphism and $L_1\circ f_{4'}=f_{4'}\circ R$. Thus $f\circ R_0=L\circ L_1\circ f_{4'}$, so $L\circ L_1\in GA_{\mathbb{R}}(2,2)$ and $f\in O_{\mathbb{R}}(f_{4'})$.

As in (1) we obtain that $f\in O_{\mathbb{R}}(f_4)$ if and only if $\overline{\Phi}_1(f)<0$, and $f\in O_{\mathbb{R}}(f_{4'})$ if and only if $\overline{\Phi}_1(f)>0$.

(5)
The description of orbits of $f_5$--$f_{17}$ provided in Section \ref{secMR} can be done in exactly the same way over $\mathbb{R}$.
\end{proof}

\section{Final Remarks}

At the end of this paper we state two conjectures:

\vspace{5mm}

{\bf Conjecture 1}. {\it For every $d_1, d_2>0$ the set $U$ of generic mappings in $\Omega(d_1,d_2)$ is an open affine subvariety of $\Omega(d_1,d_2)$. In particular every generic mapping is topologically stable, i.e. remains generic after a small deformation.}

\vspace{5mm}

{\bf Conjecture 2}. {\it For every $d_1, d_2>0$ a mapping $f\in \Omega(d_1,d_2)$ is generic if $\Delta(f)$ contains only cusps and nodes and the number of cusps and nodes is maximal possible (see \cite{fjr}).}

\vspace{5mm}

\noindent Note that for $d_1=d_2=2$ these Conjectures are true, which follows from our paper.

\end{document}